\documentclass[a4paper]{article}
\usepackage[a4paper,top=3cm,bottom=2cm,left=3cm,right=3cm,marginparwidth=1.75cm]{geometry}

\newcounter{saveenumi}
\newcommand{\seti}{\setcounter{saveenumi}{\value{enumi}}}
\newcommand{\conti}{\setcounter{enumi}{\value{saveenumi}}}

\usepackage{bm}
\usepackage{amsmath,amsthm}
\usepackage{amssymb}
\usepackage{graphicx}
\usepackage[colorinlistoftodos]{todonotes}
\usepackage[colorlinks=true, allcolors=blue]{hyperref}
\usepackage{xcolor}

\usepackage[english]{babel}
\usepackage[utf8x]{inputenc}
\usepackage[T1]{fontenc}

\newtheorem{definition}{Definition}[section]
\newtheorem{theorem}[definition]{Theorem}
\newtheorem{example}[definition]{Example}
\newtheorem{notation}[definition]{Notation}
\newtheorem{lemma}[definition]{Lemma}
\newtheorem{remark}[definition]{Remark}


\renewenvironment{proof}[1][\noindent Proof]{{\par\pushQED{\qed}\itshape #1\@. }}{\popQED}


\DeclareMathOperator{\im}{Im}
\DeclareMathOperator{\PG}{PG}
\DeclareMathOperator{\PGL}{PGL}
\newcommand{\qbin}[2]{\genfrac{[}{]}{0pt}{}{#1}{#2}}
 \newcommand{\qbreuk}[4]{\frac{(q^{#1}-1)\cdots (q^{#2}-1)}{(q^{#3}-1)\cdots (q^{#4}-1)}}
\newcommand{\qb}[2]{\frac{q^{#1}-1}{q^{#2}-1}}
\newcommand{\comments}[1]{}

\def\R{\mathbb{R}}

\title{Cameron-Liebler sets of $k$-spaces in $\PG(n,q)$}
\author{A. Blokhuis\footnote{Address: 
Department of Mathematics and Computing Science, 
Eindhoven University of Technology,
P.O. Box 513,
5600 MB Eindhoven,
the Netherlands\newline Email address: \{a.blokhuis,m.de.boeck\}@tue.nl, Website: http://www.win.tue.nl/$\sim$aartb/ }, M. De Boeck\footnotemark[1], J. D'haeseleer\footnote{Address: 
Department of Mathematics: Analysis, Logic and Discrete Mathematics, 
UGent, Krijgslaan 281 -- S8, 9000 Gent, Flanders, Belgium\newline Email address: jozefien.dhaeseleer@ugent.be }}
\date{}
\begin{document}
\maketitle

\begin{abstract}
Cameron-Liebler sets of $k$-spaces were introduced recently in \cite{Ferdinand.}. We list several equivalent definitions for these Cameron-Liebler sets, by making a generalization of known results about Cameron-Liebler line sets in $\PG(n,q)$ and Cameron-Liebler sets of $k$-spaces in $\PG(2k+1,q)$. We also present some classification results. \\
\end{abstract}

\textbf{Keywords}: Cameron-Liebler set, Grassmann graph.
\par 
\textbf{MSC 2010 codes}:  05B25, 51E20, 05E30, 51E14, 51E30.

\section{Introduction}
In \cite{CL} Cameron and Liebler introduced specific line classes in $\PG(3,q)$ when investigating the orbits of the projective groups $\PGL(n+1,q)$. These line sets $\mathcal{L}$ have the property that every line spread $\mathcal{S}$ in $\PG(3,q)$ has the same number of lines in common with $\mathcal{L}$. A lot of equivalent definitions for these sets of lines are known. An overview of the equivalent definitions can be found in \cite[Theorem $3.2$]{phdDrudge}.

After a large number of results regarding Cameron-Liebler sets of lines in the projective space $\PG(3, q)$, Cameron-Liebler sets of $k$-spaces in $\PG(2k+1, q)$ \cite{CLkclas}, and Cameron-Liebler line sets in $\PG(n, q)$ \cite{phdDrudge} were defined. In addition, this research started the motivation for defining and investigating Cameron-Liebler sets of generators in polar spaces \cite{CLpol} and Cameron-Liebler classes in finite sets \cite{CLset}. In fact Cameron-Liebler sets can be introduced for any distance-regular graph. This has been done in the past under various names: boolean degree $1$ functions, completely regular codes of strength $0$ and covering radius $1$, ... We refer to the introduction of \cite{Ferdinand.} for an overview. Note that the definitions do not always coincide, e.g. for polar spaces.

One of the main reasons for studying Cameron-Liebler sets is that there are several equivalent definitions for them, some algebraic, some geometrical (combinatorial) in nature. In this paper we investigate Cameron-Liebler sets of $k$-spaces in $\PG(n,q)$. In Section $2$ we give several equivalent definitions for these Cameron-Liebler sets of $k$-spaces. Several properties of these Cameron-Liebler sets are given in the third section.

The main question, independent of the context where Cameron-Liebler sets are investigated, is always the same: for which values of the parameter $x$ do there exist Cameron-Liebler sets and which examples correspond to a given parameter $x$.

For the Cameron-Liebler line sets, classification results and non-trivial examples were discussed in \cite{CL6,CL,CL10,phdDrudge,CL19,CL191,CL20,CL21,CL22,CL25,CL26,CL33}. The strongest classification results are given in \cite{CL21,CL26}, the latter of which proves that there exists a constant $c>0$ so that there are no Cameron-Liebler line sets in $\PG(3,q)$ with parameter $2<x<cq^{4/3}$. In \cite{CL6,new,CL10,phdDrudge,CL19,CL20} the constructions of two non-trivial Cameron-Liebler line sets with parameter $x=\frac{q^2+1}{2}$ and $x=\frac{q^2-1}{2}$ were given. Classification results for Cameron-Liebler sets of generators in polar spaces were given in \cite{CLpol} and for Cameron-Liebler classes of sets, a complete classification was given in \cite{CLset}. Regarding the Cameron-Liebler sets of $k$-spaces in $\PG(2k+1,q)$, the classification results are described in \cite{Klaus,CLkclas}.

If $q \in \{2,3,4,5\}$ a complete classification is known for Cameron-Liebler sets of $k$-spaces in $\PG(n,q)$, see \cite{Ferdinand.}. There the authors show that the only Cameron-Liebler sets in this context are the trivial Cameron-Liebler sets.
In the last section, we use the properties from Section \ref{sec3} to give the following classification result: there is no Cameron-Liebler set of $k$-spaces in $\PG(n,q)$, $n>3k+1$, with parameter $x$ such that $2\leq x\leq\frac{1}{\sqrt[8]{2}} q^{\frac{n}{2}-\frac{k^2}{4}-\frac{3k}{4}-\frac{3}{2}}(q-1)^{\frac{k^2}{4}-\frac{k}{4}+\frac{1}{2}}\sqrt{q^2+q+1}$.

\section{The characterization theorem}
Note first that we will always work with projective dimensions and that vectors are regarded as column vectors. Let $\Pi_k$ be the collection of $k$-spaces in $\PG(n,q)$ for $0 \leq k \leq n$ and let $A$ be the incidence matrix of the points and the $k$-spaces of $\PG(n,q)$: the rows of $A$ are indexed by the points and the columns by the $k$-spaces. 

We define $A_i$ as the adjacency matrix of the relation $R_i$ with \[R_i=\{ (\pi,\pi')| \pi,\pi' \in \Pi_k, \dim(\pi \cap \pi') = k-i\}, \quad  0\leq i\leq k+1.\] These relations $R_0, R_1, \dots, R_{k+1}$ form the Grassmann association scheme $J_q(n+1,k+1)$. Remark that $A_0 = I$ and $\sum_{i=0}^{k+1} A_i = J$ where $I$ and $J$ are the identity matrix and all-one matrix respectively. We denote the all-one vector by $\bm{j}$. Note that the Grassmann graph for $k$-spaces in $\PG(n,q)$ has adjacency matrix $A_1$. 

It is known that there is an orthogonal decomposition $V_0 \perp V_1 \perp \dots \perp V_{k+1}$ of $\mathbb{R}^{\Pi_k}$ in maximal common eigenspaces of $A_0,A_1,\dots,  A_{k+1}$. In the following lemmas and theorems, we denote the disjointness matrix $A_{k+1}$ also by $K$ since the corresponding graph is a $q$-Kneser graph. For more information about the Grassmann schemes we refer to \cite[Section $9.3$]{BCN} and \cite[Section $9$]{meagen}. \\

We will use the \emph{Gaussian binomial coefficient} ${\qbin ab}_q$ for $a,b\in \mathbb{N}$ with $a\geq b>0$, and prime power $q \geq 2$:
\begin{align*}
{\qbin ab}_q = \qbreuk{a}{a-b+1}{b}{}.
\end{align*}
The Gaussian binomial coefficient ${\qbin ab}_q$ is equal to the number of $b$-spaces of the vector space $\mathbb{F}^a_q$, or in the projective context, the number of $(b-1)$-spaces in the projective space $\PG(a-1,q)$. If the field size $q$ is clear from the context, we will write ${\qbin ab}$ instead of ${\qbin ab}_q$. We set $\qbin{a}{0}=1$ for all $a\in \mathbb{N}$; this is indeed the number of 0-spaces of a vector space.

The following counting result will be used several times in this article.
\begin{lemma}[{\cite[Section 170]{Segre}}]\label{lemmadisjunct}
The number of $j$-spaces disjoint from a fixed $m$-space in $\PG(n,q)$ equals $q^{(m+1)(j+1)}\qbin{n-m}{j+1}$.
\end{lemma}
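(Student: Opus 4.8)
The plan is to pass to the underlying vector space $\mathbb{F}_q^{n+1}$: the fixed $m$-space becomes a fixed $(m+1)$-dimensional subspace $U$, a $j$-space becomes a $(j+1)$-dimensional subspace $W$, and the two projective subspaces are disjoint precisely when $U\cap W=\{0\}$. So I want to count the $(j+1)$-dimensional subspaces $W$ meeting $U$ trivially. First I would count ordered tuples $(v_0,\dots,v_j)$ of vectors with $v_i\notin U+\langle v_0,\dots,v_{i-1}\rangle$ for each $i$; this condition says exactly that $v_0,\dots,v_j$ are linearly independent modulo $U$, so these tuples are precisely the ordered bases of the subspaces $W$ we are after. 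At step $i$ the forbidden subspace $U+\langle v_0,\dots,v_{i-1}\rangle$ has dimension $m+1+i$, hence there are $q^{n+1}-q^{m+1+i}$ admissible choices, and the number of ordered tuples is $\prod_{i=0}^{j}\bigl(q^{n+1}-q^{m+1+i}\bigr)$.

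Next I would divide by the number of ordered bases of a fixed $(j+1)$-dimensional space, namely $\prod_{i=0}^{j}\bigl(q^{j+1}-q^{i}\bigr)$, since each $W$ is produced once per ordered basis. The remaining step is purely computational: factor $q^{m+1+i}$ out of the $i$-th factor of the numerator and $q^{i}$ out of the $i$-th factor of the denominator. The powers of $q$ combine to $q^{\sum_{i=0}^{j}(m+1+i)-\sum_{i=0}^{j}i}=q^{(m+1)(j+1)}$, and what is left is
\[
\frac{(q^{n-m}-1)(q^{n-m-1}-1)\cdots(q^{n-m-j}-1)}{(q^{j+1}-1)(q^{j}-1)\cdots(q-1)},
\]
which by the definition of the Gaussian binomial coefficient is exactly $\qbin{n-m}{j+1}$. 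This gives the claimed count $q^{(m+1)(j+1)}\qbin{n-m}{j+1}$, and it correctly returns $0$ when $j>n-m-1$ (then a factor $q^{n+1}-q^{n+1}$ appears, and likewise $\qbin{n-m}{j+1}=0$).

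As an alternative one can argue geometrically by projecting from the fixed $m$-space $\pi$: a $j$-space disjoint from $\pi$ maps bijectively onto a $j$-space of the quotient $\PG(n,q)/\pi\cong\PG(n-m-1,q)$, of which there are $\qbin{n-m}{j+1}$, and above each such $j$-space the preimages disjoint from $\pi$ are the complements of $\pi$ inside the corresponding $(m+j+1)$-space through $\pi$; these complements are the graphs of the linear maps from a fixed complement into $\pi$, so there are $q^{(m+1)(j+1)}$ of them, yielding the same product. I do not expect a genuine obstacle here: the only points requiring care are the index bookkeeping in the product and the degenerate case where no disjoint $j$-space exists.
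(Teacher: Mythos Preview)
Your proof is correct. The counting argument via ordered tuples of vectors linearly independent modulo $U$ is entirely sound: the condition $v_i\notin U+\langle v_0,\dots,v_{i-1}\rangle$ is exactly what forces $\langle v_0,\dots,v_j\rangle$ to meet $U$ trivially, the dimension bookkeeping is right, and the algebraic simplification to $q^{(m+1)(j+1)}\qbin{n-m}{j+1}$ is clean. The alternative quotient argument is also fine.

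There is nothing to compare against in the paper itself: the authors do not prove this lemma but simply cite it as a classical result from Segre's \emph{Lectures on Modern Geometry}. So you have supplied a self-contained proof where the paper only gives a reference. Both of your arguments are standard ways to establish this count; the ordered-basis method you give first is essentially what one finds in Segre, while the quotient-and-complement description is the more structural way of seeing why the answer factors as it does.
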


To end the introduction of this section, we give the definition of a $k$-spread and a partial $k$-spread of $\PG(n,q)$.

\begin{definition}
A \emph{partial $k$-spread} of $\PG(n,q)$ is a collection of $k$-spaces which are mutually disjoint. A \emph{$k$-spread} in $\PG(n,q)$ is a partial $k$-spread in $\PG(n,q)$ that partitions the point set of $\PG(n,q)$. 
\end{definition}

Remark that a $k$-spread of $\PG(n,q)$ exists if and only if $k+1$ divides $n+1$, and necessarily contains $\frac{q^{n+1}-1}{q^{k+1}-1}$ elements (\cite{segre2}). 
A regular $k$-spread is a $k$-spread that can be constructed using field reduction. 

Before we start with proving some equivalent definitions for a Cameron-Liebler set of $k$-spaces, we give some lemmas and definitions that we will need in the characterization Theorem \ref{theodef}.

\begin{lemma}[{\cite{Delsarte}}]\label{eigenvallem}
Consider the Grassmann scheme $J_q(n+1,k+1)$. The eigenvalue $P_{ji}$ of the distance-$i$ relation for $V_j$ is given by:
\begin{align*}\label{eigenval}
P_{ji} = \sum\limits_{s=\max{(0,j-i)}}^{\min{(j,k+1-i)}} (-1)^{j+s}\qbin{j}{s}\begin{bmatrix}n-k+s-j \\ n-k-i\end{bmatrix} \begin{bmatrix} k+1-s \\ i\end{bmatrix} q^{i(i+s-j)+\binom{j-s}{2}}.
\end{align*}
\end{lemma}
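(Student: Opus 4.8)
The plan is to realise the common eigenspaces $V_0,\dots,V_{k+1}$ explicitly as the ``levels'' coming from the subspace incidence maps, to reduce the computation of the eigenvalue of $A_i$ on $V_j$ to a single counting problem by a $q$-binomial inversion, and then to carry out that count with Lemma~\ref{lemmadisjunct}. Throughout I assume $n\ge 2k+1$, which is the range in which $J_q(n+1,k+1)$ genuinely has $k+2$ classes (for $n<2k+1$ the statement is read with the obvious identifications). For $0\le t\le k+1$ let $N_t$ be the $\Pi_{t-1}\times\Pi_k$ incidence matrix, with $(N_t)_{\rho,\tau}=1$ iff the $(t-1)$-space $\rho$ lies in the $k$-space $\tau$ (so $N_1=A$, and $N_0^{\mathsf T}$ sends the single basis vector to $j$), and set $U_t=\im(N_t^{\mathsf T})\subseteq\R^{\Pi_k}$. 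One checks that $U_0\subseteq U_1\subseteq\cdots\subseteq U_{k+1}=\R^{\Pi_k}$ with $\dim U_t=\qbin{n+1}{t}$, that each $U_t$ is invariant under every $A_i$ — because the $\tau'$-entry of $A_iN_t^{\mathsf T}e_\rho$ depends only on $\dim(\tau'\cap\rho)$ by transitivity of $\PGL(n+1,q)$, and every vector whose $\tau'$-entry is a function of $\dim(\tau'\cap\rho)$ lies in $U_t$ (expand that function in the linearly independent vectors $\tau'\mapsto\qbin{\dim(\tau'\cap\rho)+1}{\ell}=N_\ell^{\mathsf T}\bigl(\sum_{\mu\subseteq\rho,\dim\mu=\ell-1}e_\mu\bigr)$, $0\le\ell\le t$) — and that, consequently, the spaces $V_j:=U_j\cap U_{j-1}^{\perp}$ (with $U_{-1}=0$) are exactly the common eigenspaces $V_0,\dots,V_{k+1}$ of the scheme; this identification is standard (see \cite{BCN}). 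It then suffices to compute the eigenvalue of $A_i$ on $V_j$.

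Next I would fix a $(j-1)$-space $\pi$ and set $y_\pi:=N_j^{\mathsf T}e_\pi=\sum_{\tau\supseteq\pi}e_\tau\in U_j$. Since $n\ge 2k+1$ we have $y_\pi\notin U_{j-1}$, so the orthogonal projection of $y_\pi$ onto $V_j$ is a nonzero eigenvector of $A_i$ with eigenvalue $P_{ji}$; as $A_i$ also preserves $U_{j-1}$, this gives $A_iy_\pi\equiv P_{ji}\,y_\pi\pmod{U_{j-1}}$. Now the $\tau'$-entry of $A_iy_\pi$ equals $g_i(s)$, where $s:=\dim(\tau'\cap\pi)+1\in\{0,\dots,j\}$ and $g_i(s)$ is the number of $k$-spaces $\tau$ with $\pi\subseteq\tau$ and $\dim(\tau\cap\tau')=k-i$ (this depends only on $s$, again by transitivity). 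Writing $g_i(s)=\sum_{\ell=0}^{s}\lambda_\ell\qbin s\ell$ determines the $\lambda_\ell$, and the vector $\tau'\mapsto\qbin{\dim(\tau'\cap\pi)+1}{\ell}$ lies in $U_\ell$, hence in $U_{j-1}$ for $\ell\le j-1$, while for $\ell=j$ it equals $y_\pi$; therefore $A_iy_\pi\equiv\lambda_j\,y_\pi\pmod{U_{j-1}}$, so $P_{ji}=\lambda_j$, and the inverse $q$-binomial transform yields
\[
P_{ji}=\sum_{s=0}^{j}(-1)^{j-s}q^{\binom{j-s}{2}}\qbin js\,g_i(s).
\]

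It then remains to evaluate $g_i(s)$, which is the one genuinely computational step. With $\dim(\pi\cap\tau')=s-1$, I would count the $k$-spaces $\tau\supseteq\pi$ with $\dim(\tau\cap\tau')=k-i$ by first choosing $W:=\tau\cap\tau'$, a $(k-i)$-subspace of $\tau'$ through $\pi\cap\tau'$ (there are $\qbin{k+1-s}{i}$ of these), then extending $\langle\pi,W\rangle$ — which has dimension $j+k-i-s$ and meets $\tau'$ in exactly $W$ — to a $k$-space $\tau$ picking up no further point of $\tau'$; passing to the quotient by $\langle\pi,W\rangle$ this is the number of $(i+s-j-1)$-spaces disjoint from a fixed $(i-1)$-space in $\PG(n-j-k+i+s-1,q)$, which by Lemma~\ref{lemmadisjunct} equals $q^{i(i+s-j)}\qbin{n-k+s-j}{n-k-i}$. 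Thus $g_i(s)=q^{i(i+s-j)}\qbin{k+1-s}{i}\qbin{n-k+s-j}{n-k-i}$, and substituting into the displayed formula (using $(-1)^{j-s}=(-1)^{j+s}$ and $\binom{j-s}{2}=\tfrac{(j-s)(j-s-1)}{2}$) gives exactly the claimed expression; the summation range collapses to $\max(0,j-i)\le s\le\min(j,k+1-i)$ precisely because outside it one of the Gaussian binomials $\qbin{k+1-s}{i}$, $\qbin{n-k+s-j}{n-k-i}$ or $\qbin js$ vanishes.

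The main obstacle is the bookkeeping in this last step: tracking the dimensions of $\langle\pi,W\rangle$ and of the image of $\tau'$ in the quotient, and massaging the output of Lemma~\ref{lemmadisjunct} into the stated binomials (via the symmetry $\qbin ab=\qbin a{a-b}$) without losing a power of $q$. Everything upstream — the incidence-map description of the $V_j$, their $A_i$-invariance, and the $q$-binomial inversion — is structural and essentially forced. As a safeguard I would check the formula against $P_{j0}=1$ (only $s=j$ survives), $P_{0i}=q^{i^2}\qbin{k+1}{i}\qbin{n-k}{i}$ (the valency of the distance-$i$ relation, only $s=0$ survives), and $\sum_{i=0}^{k+1}P_{ji}=0$ for $j\ge 1$ (since $j\perp V_j$); these catch any sign error or stray power of $q$.
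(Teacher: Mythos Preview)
The paper does not prove this lemma at all; it is quoted as a known result with a citation to Eisfeld~\cite{Delsarte}. So there is no ``paper's own proof'' to compare against beyond the reference.

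Your argument is correct and is essentially the classical derivation that underlies the cited result. The identification $V_j=U_j\cap U_{j-1}^{\perp}$ via the incidence maps $N_t$, together with $A_i$-invariance of the $U_t$, is the standard filtration approach (as in \cite{BCN} or \cite{meagen}); your reduction $P_{ji}=\lambda_j$ through $A_iy_\pi\equiv\lambda_j y_\pi\pmod{U_{j-1}}$ is clean and the $q$-M\"obius inversion is the right tool to extract $\lambda_j$. I checked your count of $g_i(s)$ step by step: the choice of $W$ gives $\qbin{k+1-s}{i}$ via the quotient by $\pi\cap\tau'$, the dimension of $\langle\pi,W\rangle$ and its intersection with $\tau'$ are as you state (the latter follows from a dimension count using $\dim\langle\pi,\tau'\rangle=j+k+1-s$), and the application of Lemma~\ref{lemmadisjunct} in the quotient yields exactly $q^{i(i+s-j)}\qbin{n-k+s-j}{n-k-i}$ after using $\qbin ab=\qbin a{a-b}$. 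The vanishing of the Gaussian binomials outside $\max(0,j-i)\le s\le\min(j,k+1-i)$ trims the sum to the stated range, and your sanity checks $P_{j0}=1$ and $P_{0i}=q^{i^2}\qbin{n-k}{i}\qbin{k+1}{i}$ go through.

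One small point you left implicit: the assertion $y_\pi\notin U_{j-1}$ for \emph{every} $(j-1)$-space $\pi$ (not merely some) is what you need, and it follows because $\PGL(n+1,q)$ is transitive on $(j-1)$-spaces, so if one $y_\pi$ lay in $U_{j-1}$ they all would, forcing $U_j=U_{j-1}$ and contradicting $\dim U_j=\qbin{n+1}{j}>\qbin{n+1}{j-1}=\dim U_{j-1}$ (valid for $j\le k+1\le\tfrac{n+1}{2}$). With that made explicit, the proof is complete.
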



\begin{lemma} \label{lemma2}
If $P_{1i}, i\geq 1,$ is the eigenvalue of $A_i$ corresponding to $V_j$, then $j=1$.
\end{lemma}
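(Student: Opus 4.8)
The plan is to unwind the statement by means of Lemma~\ref{eigenvallem}: there the eigenvalue of $A_i$ on the common eigenspace $V_j$ is given by $P_{ji}$, so the hypothesis ``$P_{1i}$ is the eigenvalue of $A_i$ corresponding to $V_j$ for every $i\ge1$'' says exactly that rows $1$ and $j$ of the first eigenmatrix $P=(P_{ji})_{0\le j,i\le k+1}$ agree in all columns $i\ge1$; since the column $i=0$ is constantly $1$ (as $A_0=I$), this means these two rows of $P$ are equal, and the lemma reduces to the claim that the rows of $P$ are pairwise distinct. (If one instead reads the statement as concerning a single relation $A_i$, the goal becomes $P_{1i}\ne P_{ji}$ for all $j\ne1$; for $i=1$ this is again the distinctness of the Grassmann eigenvalues below, and general $i$ is addressed in the last paragraph.)

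I would obtain the distinctness of the rows of $P$ from the structure of the scheme. The matrix $A_1$ is the adjacency matrix of the Grassmann graph, which is distance-regular of diameter $k+1$ in the situation at hand: the relation $R_{k+1}$ is non-empty, i.e.\ $n\ge2k+1$, so there really are $k+1$ non-trivial classes. A distance-regular graph of diameter $d$ has exactly $d+1$ distinct eigenvalues, so $A_1$ has $k+2$ distinct eigenvalues; since the $k+2$ spaces $V_0,\dots,V_{k+1}$ are non-zero common eigenspaces decomposing $\mathbb{R}^{\Pi_k}$, they are precisely the eigenspaces of $A_1$, and hence the numbers $P_{01},\dots,P_{k+1,1}$ are pairwise distinct. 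In particular $P_{j1}=P_{11}$ already forces $j=1$, which gives the lemma a fortiori. (Equivalently, one may quote that the first eigenmatrix of an association scheme is non-singular, hence has distinct rows, or that $A_1$ generates the Bose--Mesner algebra.) I do not expect a genuine obstacle here; the one point to check is that the Grassmann graph really has diameter $k+1$, which is exactly the regime $n\ge2k+1$ in which $J_q(n+1,k+1)$ has $k+1$ classes.

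Should a proof that stays inside the explicit formula of Lemma~\ref{eigenvallem} be preferred, one evaluates it at $j=0$ and $j=1$: for $1\le i\le k$ one gets $P_{0i}=q^{i^2}\qbin{n-k}{i}\qbin{k+1}{i}$ and $P_{1i}=q^{i^2}\qbin{n-k}{i}\qbin{k}{i}-q^{i(i-1)}\qbin{n-k-1}{i-1}\qbin{k+1}{i}$, whence $P_{0i}>P_{1i}$ (using $\qbin{k+1}{i}>\qbin{k}{i}$), while for $i=k+1$ one has $P_{1,k+1}=-q^{k(k+1)}\qbin{n-k-1}{k}<0<P_{0,k+1}$; this settles $j=0$. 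The remaining inequalities $P_{1i}\ne P_{ji}$ for $2\le j\le k+1$ are where the work sits: one must compare the alternating sums of Lemma~\ref{eigenvallem} for different $j$, weighing the $q$-powers $q^{i(i+s-j)+(j-s)(j-s-1)/2}$ (which grow with $j$) against the shrinking Gaussian binomial factors. This case analysis is the main obstacle along that route, which is exactly why I would rather deduce the lemma from the distinctness of the Grassmann-graph eigenvalues.
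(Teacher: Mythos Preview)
Your reading of the lemma is pivotal. You offer two interpretations and devote your main argument to the weaker one---that if $P_{1i}=P_{ji}$ holds for \emph{every} $i\ge1$ then $j=1$. For that reading your distance-regular argument is clean and correct: the Grassmann graph $A_1$ has $k+2$ distinct eigenvalues, so already $P_{11}=P_{j1}$ forces $j=1$. However, the paper both proves and uses the stronger, single-$i$ reading: for each fixed $i\ge1$, the eigenvalue $P_{1i}$ of $A_i$ is not shared by any $V_j$ with $j>1$. This is exactly what is invoked in Theorem~\ref{theodef}: in $3\Rightarrow4$ one only shows that $v$ is an eigenvector of $A_{k+1}$ with eigenvalue $P_{1,k+1}$ (not of all $A_i$), and in $6\Rightarrow4$ one shows $v$ is an eigenvector of a single $A_i$; in both places Lemma~\ref{lemma2} is then quoted to land $v$ in $V_1$. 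Your argument does not supply this, and the remark that $A_1$ generates the Bose--Mesner algebra does not bridge the gap either: writing $A_i=p_i(A_1)$, the polynomial $p_i$ need not be injective on the spectrum, so distinctness of the $P_{j1}$ does not transfer to the $P_{ji}$.

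The paper's proof of the single-$i$ statement is precisely the computation you flag as the ``main obstacle'' and then set aside. One compares the $q$-adic valuations: writing $\phi_i(j)$ for the exponent of $q$ in $P_{ji}$, the unique term of the Delsarte sum with minimal exponent $f_{ij}(s)=i(i+s-j)+\binom{j-s}{2}$ is located (at $s=0$ if $j\le i$, at $s=j-i$ if $j\ge i$), giving $\phi_i(1)>\phi_i(2)>\cdots>\phi_i(i)=\cdots=\phi_i(k+1)$; hence $P_{1i}\ne P_{ji}$ for $j\ge2$ whenever $i\ge2$. The remaining case $i=1$ is then handled by the explicit formulas for $P_{11}$ and $P_{j1}$---equivalent to your distance-regularity observation. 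So your proposal covers the $i=1$ step elegantly but leaves the $i\ge2$ step, which is the real content of the paper's proof, as an acknowledged but unexecuted case analysis.
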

\begin{proof}
We need to prove that $P_{1i} \neq P_{ji}$ for $q$ a prime power and $j>1$. We will first introduce $\phi_i(j) = \max\left\{a\mid q^a|P_{ji} \right\}$, which is the exponent of $q$ in the factorization of $P_{ji}$. Remark that $\qbin a b$ equals $1$ modulo $q$ and note that it is sufficient to show that $\phi_i(j)$, $j>1$,  is different from $\phi_i(1)$ for all $i$. By Lemma \ref{eigenvallem} we see that \[{\phi_i(j)}= \min\left\{i(i+s-j)+\binom{j-s}{2}\mid \max\{0,j-i\} \leq s \leq \min\{j,k+1-i\}\right\}\] unless there are two or more terms with a power of $q$ with minimal exponent as factor and that have zero as their sum. 
If $s$ is the integer in $\{ \max\{0,j-i\},\dots, \min\{j,k+1-i\}\}$ closest to $j-i-\frac{1}{2}$, then $f_{ij}(s)=i(i+s-j)+\binom{j-s}{2}$ is minimal.
\begin{itemize}
\item If $j \leq i$, we see that $f_{ij}(s)$ is minimal for $s=0$. Then we find ${\phi_i(j)}=\frac 12 j^2-(i+\frac{1}{2})j+i^2$. We see that for a fixed $i$, $\phi_i(k-1)>\phi_i(k), k\leq i$. Note that the minimal value for $f_{ij}(s)$ is reached for only one $s$.
\item If $j \geq i$, we see that $f_{ij}(s)$ is minimal for $s=j-i$. Then we find ${\phi_i(j)}={\binom{i}{2}}$. Again we note that the minimal value for $f_{ij}(s)$ is reached for only one $s$.
\end{itemize}
We can conclude the following inequality for a given $i\geq 1$:
\begin{align*}
\phi_i(1)>\phi_i(2)>\dots >\phi_i(i)=\phi_i(i+1)=\dots=\phi_i(k+1) \, .
\end{align*}
This implies the statement for $i\neq 1$.

For $i = 1$ we have $P_{11}=-\qbin{k+1}{1}+\qbin{n-k}{1}\qbin{k}{1}q$ and $P_{j1}=-\qbin{j}{1}\qbin{k-j+2}{1}+\qbin{n-k}{1}\qbin{k+1-j}{1}q$, so we can see that they are different if $j \neq n+1$. This is always true since $j\in \{1,\dots ,k+1\}$ and $k<n$.
\end{proof}

Note that for $j\geq 1$ it was already known that $|P_{ji}| \leq |P_{1i}|$. This result was shown in \cite[Proposition $5.4(ii)$]{brouwer}.

\begin{lemma}\label{lemmaaantaldisjunct}
Let $\pi$ be a $k$-dimensional subspace in $\PG(n,q)$ with $\chi_\pi$ the characteristic vector of the set $\{\pi \}$. If $\mathcal{Z}$ is the set of all $k$-spaces in $\PG(n,q)$ disjoint from $\pi$ with characteristic vector $\chi_\mathcal{Z}^{}$, then
\begin{align*}
\chi_\mathcal{Z}^{} -q^{k^2+k}\begin{bmatrix}n-k-1\\k\end{bmatrix} \left(\begin{bmatrix}
n\\k\end{bmatrix}^{-1}  \bm{j} -\chi_\pi \right) \in \ker(A).
\end{align*}
\end{lemma}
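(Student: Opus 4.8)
The plan is to verify directly that $A$ kills the displayed vector, by computing the image under $A$ of each of its two parts and checking that they coincide. Recall that for a set $\mathcal{S}$ of $k$-spaces the vector $A\chi_{\mathcal{S}}$ has, in the entry indexed by a point $p$, the number of members of $\mathcal{S}$ through $p$; so it suffices to show that $A\chi_{\mathcal{Z}}$ and $A$ applied to the second summand agree point by point.

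First I would compute $A\chi_{\mathcal{Z}}$. If $p\in\pi$, then no $k$-space through $p$ can be disjoint from $\pi$, so the entry is $0$. If $p\notin\pi$, I pass to the quotient geometry $\PG(n,q)/p\cong\PG(n-1,q)$: a $k$-space through $p$ corresponds to a $(k-1)$-space there, and since $p\notin\pi$ the space $\pi$ projects isomorphically onto a $k$-space $\bar\pi$ of $\PG(n-1,q)$. A short incidence argument (a common point of $\sigma$ and $\pi$ is distinct from $p$ and yields a common point of the projections, and conversely) shows that a $k$-space $\sigma\ni p$ is disjoint from $\pi$ if and only if its projection is disjoint from $\bar\pi$. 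Hence that entry equals the number of $(k-1)$-spaces of $\PG(n-1,q)$ disjoint from a fixed $k$-space, which by Lemma~\ref{lemmadisjunct} (with parameters $n-1$, $m=k$, $j=k-1$) equals $q^{k^2+k}\qbin{n-1-k}{k}$. Thus $A\chi_{\mathcal{Z}}$ is $q^{k^2+k}\qbin{n-1-k}{k}$ times the characteristic vector of the points not on $\pi$.

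Next I would compute $A$ on the second summand. Since every point lies on exactly $\qbin{n}{k}$ $k$-spaces, we have $Aj=\qbin{n}{k}\,j$ (the all-one vector on $k$-spaces mapping to a multiple of the all-one vector on points), so $A\big(\qbin{n}{k}^{-1}j\big)=j$; and $A\chi_\pi$ is the characteristic vector of the point set of $\pi$. Therefore $A\big(\qbin{n}{k}^{-1}j-\chi_\pi\big)$ is the characteristic vector of the points off $\pi$, and scaling by $q^{k^2+k}\qbin{n-k-1}{k}$ gives precisely the vector $A\chi_{\mathcal{Z}}$ found above, using $\qbin{n-k-1}{k}=\qbin{n-1-k}{k}$. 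Subtracting, the displayed vector lies in $\ker(A)$.

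The computation is essentially routine; the only step that needs real care is the count of $k$-spaces through $p$ that are disjoint from $\pi$, namely the reduction, via projection from $p$, to an application of Lemma~\ref{lemmadisjunct}. Everything else is bookkeeping with Gaussian binomial coefficients.
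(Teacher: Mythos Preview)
Your proof is correct and follows the same approach as the paper: both compute $A\chi_{\mathcal{Z}}$ and $A$ applied to the second summand separately, and check they coincide, using Lemma~\ref{lemmadisjunct} for the key count. The only difference is that you spell out the quotient argument at $p$ to deduce the number of $k$-spaces through a point $p\notin\pi$ disjoint from $\pi$, whereas the paper simply cites Lemma~\ref{lemmadisjunct} for this count without elaboration; your version is more explicit but not genuinely different.
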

\begin{proof}
Let $v_\pi$ be the incidence vector of $\pi$ with its positions corresponding to the points of $\PG(n,q)$. Note that $A\chi_\pi = v_\pi$. We have that $A\chi_\mathcal{Z}^{} = q^{k^2+k}\qbin{n-k-1}{k}(\bm{j}-v_\pi)$ since $\mathcal{Z}$ is the set of all $k$-spaces disjoint from $\pi$ and every point not in $\pi$ is contained in $q^{k^2+k}\qbin{n-k-1}{k}$ $k$-spaces skew to $\pi$ (see Lemma \ref{lemmadisjunct}). The lemma now follows from
\begin{align*}
&\chi_\mathcal{Z}^{} -q^{k^2+k}\begin{bmatrix}n-k-1\\k\end{bmatrix} \left(\begin{bmatrix}
n\\k\end{bmatrix}^{-1}  \bm{j} -\chi_\pi \right) \in \ker(A)\\
\Leftrightarrow\quad & A\chi_\mathcal{Z}^{} =q^{k^2+k}\begin{bmatrix}n-k-1\\k\end{bmatrix} \left(\begin{bmatrix}
n\\k\end{bmatrix}^{-1} A\bm{j} -A\chi_\pi \right)\;. \qedhere
\end{align*}
\end{proof}

\begin{definition}
A \emph{switching set} is a partial $k$-spread $\mathcal{R}$ for which there exists a partial $k$-spread $\mathcal{R}'$ such that $\mathcal{R} \cap \mathcal{R}' = \emptyset$, and $\cup_{\pi\in \mathcal{R}} \pi = \cup_{\pi\in \mathcal{R}'} \pi$, in other words, $\mathcal{R}$ and $\mathcal{R}'$ have no common members and cover the same set of points. We say that $\mathcal{R}$ and $\mathcal{R}'$ are a \emph{pair of conjugate switching sets}.
\end{definition}

The next lemma is a classical result in design theory.
\begin{lemma}\label{lemmaAfullrow}
The incidence matrix of a $2$-design has full row rank. 
\end{lemma}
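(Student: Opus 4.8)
The plan is to derive the statement from the positive definiteness of $MM^{T}$, which is the classical route underlying Fisher's inequality. Suppose $D$ is a $2$-$(v,k,\lambda)$ design, so that $M$ is the $v\times b$ matrix whose rows are indexed by the $v$ points and whose columns by the $b$ blocks; let $r$ denote the (constant) number of blocks through a point. First I would record the two standard counting identities $bk=vr$ and $r(k-1)=\lambda(v-1)$, obtained by double counting incident point--block pairs, respectively incident pairs $(\{x,y\},B)$ with $x\neq y$ and $x,y\in B$.

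The key step is the computation of $MM^{T}$: its $(x,y)$-entry equals the number of blocks containing both $x$ and $y$, which is $r$ when $x=y$ and $\lambda$ when $x\neq y$, the latter being exactly the defining property of a $2$-design. Hence $MM^{T}=(r-\lambda)I+\lambda J$, where $I$ and $J$ now denote the $v\times v$ identity and all-one matrices. Diagonalizing $J$ (eigenvalue $v$ on $j$, eigenvalue $0$ on $j^{\perp}$) then shows that $MM^{T}$ has eigenvalue $r-\lambda+\lambda v=r+\lambda(v-1)=rk$ with multiplicity $1$, and eigenvalue $r-\lambda=\frac{r(v-k)}{v-1}$ with multiplicity $v-1$.

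Since in a (nondegenerate) $2$-design every block has at least two points and is a proper subset of the point set, we have $2\le k\le v-1$, whence $r(k-1)=\lambda(v-1)>0$ forces $r>0$, and then $rk$ and $\frac{r(v-k)}{v-1}$ are both strictly positive. Therefore $MM^{T}$ is positive definite, in particular nonsingular, so $\operatorname{rank}(M)\ge\operatorname{rank}(MM^{T})=v$; as $M$ has only $v$ rows, $\operatorname{rank}(M)=v$, i.e.\ $M$ has full row rank. The only point that needs care is ensuring the nondegeneracy hypothesis so that $r-\lambda>0$ (equivalently $k<v$) and $r>0$; everything else is routine linear algebra.
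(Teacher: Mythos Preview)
Your argument is correct: it is precisely the standard Fisher-type proof, computing $MM^{T}=(r-\lambda)I+\lambda J$ and checking that both eigenvalues $rk$ and $r-\lambda$ are strictly positive under the nondegeneracy assumption $2\le k\le v-1$. The paper does not supply a proof at all; it simply records the lemma as ``a classical result in design theory'', so your write-up is exactly the classical argument the authors are implicitly invoking.
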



The following lemma gives the relation between the common eigenspaces $V_0$ and $V_1$ of the matrices $A_i,i \in \{0,\dots, k+1\}$, and the row space of the matrix $A$. For the proof we refer to \cite[Theorem 9.1.4]{meagen}.

\begin{lemma} \label{lemmaAgelijkaanV_0V_1}
For the Grassmann scheme $J_q(n+1,k+1)$ we have that $\im(A^T)=V_0 \perp V_1$ and $V_0 = \langle \bm{j} \rangle$.
\end{lemma}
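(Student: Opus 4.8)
The plan is to work with $\im(A^T)=\row(A)$, the row space of $A$ inside $\mathbb{R}^{\Pi_k}$, which is spanned by the characteristic vectors $\chi_{[P]}$ of the point-pencils $[P]=\{\pi\in\Pi_k\mid P\in\pi\}$, and to prove it equals $V_0\perp V_1$. Two ingredients come for free. Each $k$-space contains $\qbin{k+1}{1}$ points, so the sum of all rows of $A$ is $\qbin{k+1}{1}\,j$, whence $j\in\row(A)$; and each $A_i$ has constant valency, so $j$ is a common eigenvector of $A_0,\dots,A_{k+1}$ and $\langle j\rangle$ is the common eigenspace conventionally labelled $V_0$. This already yields the second assertion $V_0=\langle j\rangle$, as well as $V_0\subseteq\row(A)$.

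The core step is to show that $\ker(A)=\row(A)^{\perp}$ is invariant under the algebra spanned by $A_0,\dots,A_{k+1}$. Fix a point $P$ and a $k$-space $\pi'$, and count the $k$-spaces through $P$ at distance $i$ from $\pi'$: since $\PGL(n+1,q)$ acts transitively on incident, and on non-incident, point/$k$-space pairs, this number is a constant $a_i$ when $P\in\pi'$ and a constant $b_i$ when $P\notin\pi'$. Hence $AA_i=b_iN+(a_i-b_i)A$, where $N$ is the all-ones matrix of the same shape as $A$. If $v\in\ker(A)$, then summing the entries of $Av=0$ over all points gives $\qbin{k+1}{1}\sum_{\pi}v_{\pi}=0$, so $\sum_{\pi}v_{\pi}=0$ and therefore $Nv=0$; consequently $A(A_iv)=(AA_i)v=0$, i.e.\ $A_iv\in\ker(A)$. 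Thus $\ker(A)$, hence also $\row(A)$, is an orthogonal sum of common eigenspaces, and the sum for $\row(A)$ contains $V_0$.

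It remains to determine which eigenspaces occur, which I would do by counting dimensions. For $1\le k\le n-1$ the points and $k$-spaces of $\PG(n,q)$ form a $2$-design (two distinct points lie on $\qbin{n-1}{k-1}$ common $k$-spaces), so Lemma \ref{lemmaAfullrow} gives that $A$ has full row rank and $\dim\row(A)=\qbin{n+1}{1}$, the number of points. On the other hand, writing $M:=\min(k+1,n-k)$, the nonzero common eigenspaces of $J_q(n+1,k+1)$ are $V_0,\dots,V_M$ with $\dim V_j=m_j:=\qbin{n+1}{j}-\qbin{n+1}{j-1}$; in particular $m_0+m_1=\qbin{n+1}{1}=\dim\row(A)$. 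Since $(m_j)_{0\le j\le M}$ is strictly increasing (the index range forces $n$ large enough that the standard estimates on ratios of Gaussian binomials give $m_{j-1}<m_j$), the only orthogonal sum of $V_j$'s that contains $V_0$ and has total dimension $m_0+m_1$ is $V_0\perp V_1$. Hence $\row(A)=V_0\perp V_1$.

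I expect the uniqueness in the last paragraph to be the point requiring genuine care: one must rule out that a combination of the $m_j$ with $j\ge2$ equals $m_1$, which relies on the monotonicity of the Gaussian-binomial difference sequence over the relevant range. An alternative route that sidesteps this is a direct eigenvalue computation. The matrix $A^{T}A=\sum_{i=0}^{k+1}\qbin{k+1-i}{1}A_i$ lies in the algebra spanned by the $A_i$ (its $(\pi,\pi')$-entry is $|\pi\cap\pi'|$, which equals $\qbin{k+1-i}{1}$ on the relation $R_i$), so its eigenvalue on $V_j$ is $\sum_{i=0}^{k+1}\qbin{k+1-i}{1}P_{ji}$; using the closed form of Lemma \ref{eigenvallem} one checks that this vanishes precisely when $j\ge2$, and since $\ker(A)=\ker(A^{T}A)$ over $\mathbb{R}$ this pins down $\row(A)=V_0\perp V_1$ directly.
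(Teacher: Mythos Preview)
The paper does not prove this lemma at all: it simply refers to \cite[Theorem 9.1.4]{meagen}. So there is nothing to compare your argument against in the paper itself; you have supplied a genuine proof where the authors chose to cite.

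Your argument is sound. The invariance step is clean: the identity $AA_i=b_iN+(a_i-b_i)A$ follows exactly as you say from the transitivity of $\PGL(n+1,q)$ on incident and on non-incident point/$k$-space pairs, and the observation $j\in\ker(A)^{\perp}$ gives $Nv=0$ for $v\in\ker(A)$, so $\ker(A)$ is $A_i$-invariant. Together with $j\in\row(A)$ and full row rank (Lemma \ref{lemmaAfullrow}), this reduces the problem to identifying a sum of $V_j$'s, containing $V_0$, of total dimension $\qbin{n+1}{1}=m_0+m_1$.

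You are right that the delicate point is ruling out any other combination $V_0\perp\bigoplus_{j\in S}V_j$ with $\sum_{j\in S}m_j=m_1$ and $S\subseteq\{2,\dots,k+1\}$. Your monotonicity claim is correct in the relevant range $n\ge 2k+1$: writing $m_j=\qbin{n+1}{j-1}\cdot q^{\,j}\dfrac{q^{n+2-2j}-1}{q^{\,j}-1}$, one checks $m_{j+1}/m_j>1$ for $1\le j\le k$, so each $m_j$ with $j\ge2$ already exceeds $m_1$ and no nonempty subset-sum can hit $m_1$. Your alternative via $A^TA=\sum_i\qbin{k+1-i}{1}A_i$ is in fact the slicker route and avoids this combinatorics entirely: over $\R$ one has $\ker(A^TA)=\ker(A)$, and since $AA^T=q^{k}\qbin{n-1}{k}I+\qbin{n-1}{k-1}J$ has exactly two (positive, distinct) eigenvalues with multiplicities $1$ and $\qbin{n+1}{1}-1$, the same holds for the nonzero spectrum of $A^TA$; as $A^TA$ lies in the Bose--Mesner algebra, its eigenspaces are sums of $V_j$'s, and the multiplicity-$1$ eigenspace forces $V_0$ while the multiplicity-$m_1$ eigenspace then must be $V_1$ (here one only needs $m_j\neq m_1$ for $j\ge2$, which follows from the same formula). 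Either finish is fine; just carry one of them through rather than leaving it at ``one checks''.
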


We want to make a combination of a generalization of Theorem $3.2$ in \cite{phdDrudge} and Theorem $3.7$ in \cite{CLkclas} to give several equivalent definitions for a Cameron-Liebler set of $k$-spaces in $\PG(n,q)$.
\begin{theorem} \label{theodef}
Let $\mathcal{L}$ be a non-empty set of $k$-spaces in $\PG(n,q), n\geq 2k+1$, with characteristic vector $\chi$, and $x$ so that $|\mathcal{L}|=x\begin{bmatrix}
n\\k\end{bmatrix}$. Then the following properties are equivalent.
\begin{enumerate}
\item $\chi \in$ $\im(A^T)$.
\item $\chi \in \ker(A)^\perp$.
\item  For every $k$-space $\pi$, the number of elements of $\mathcal{L}$ disjoint from $\pi$ is $(x-\chi(\pi))\qbin{n-k-1}{k}q^{k^2+k}$.
\item The vector $\chi -x\frac{q^{k+1}-1}{q^{n+1}-1} \bm{j}$ is a vector in $V_1$. 
\item $\chi \in V_0 \perp V_1$.
\item  For a given $i\in \{1, \dots, k+1\}$ and any $k$-space $\pi$, the number of elements of $\mathcal{L}$, meeting $\pi$ in a $(k-i)$-space is given by:
\begin{align*}
\begin{cases}
 \left( (x-1) \frac{q^{k+1}-1}{q^{k-i+1}-1}+q^{i}\frac{q^{n-k}-1}{q^{i}-1}\right) q^{i(i-1)}\begin{bmatrix}
n-k-1 \\ i-1 \end{bmatrix} \begin{bmatrix} k\\i \end{bmatrix}& \mbox{if } \pi \in \mathcal{L}\\  x \begin{bmatrix} n-k-1 \\ i-1\end{bmatrix} \begin{bmatrix} k+1 \\ i \end{bmatrix}q^{i(i-1)} & \mbox{if }\pi \notin \mathcal{L}
\end{cases}\;. 
\end{align*}
\item for every pair of conjugate switching sets $\mathcal{R}$ and $\mathcal{R}'$, we have that $|\mathcal{L} \cap \mathcal{R}| = |\mathcal{L} \cap \mathcal{R}'|$.
\seti
\end{enumerate}
If $\PG(n,q)$ admits a $k$-spread, then the following properties are equivalent to the previous ones.
\begin{enumerate} \conti
\item $|\mathcal{L}\cap \mathcal{S}| = x$ for every $k$-spread $\mathcal{S}$ in $\PG(n,q)$.
\item $|\mathcal{L}\cap \mathcal{S}| = x$ for every regular $k$-spread $\mathcal{S}$ in $\PG(n,q)$.
\end{enumerate}
\end{theorem}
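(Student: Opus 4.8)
The plan is to route every equivalence through statement (5), i.e.\ through the description of the row space of $A$ given by Lemma \ref{lemmaAgelijkaanV_0V_1}. The purely linear-algebraic core is (1) $\Leftrightarrow$ (2) $\Leftrightarrow$ (5): the first is the identity $\im(A^{T})=(\ker A)^{\perp}$, valid for any real matrix, and the second is exactly Lemma \ref{lemmaAgelijkaanV_0V_1}, which also yields $(\ker A)^{\perp}=V_0\perp V_1$. For (5) $\Leftrightarrow$ (4) I would use $V_0=\langle j\rangle$: every vector decomposes as $\chi=\alpha j+v$ with $v\perp j$, and pairing with $j$ forces $\alpha=|\mathcal L|/\qbin{n+1}{k+1}=x\tfrac{q^{k+1}-1}{q^{n+1}-1}$, so $\chi\in V_0\perp V_1$ is precisely the assertion that this particular $v$ lies in $V_1$. (Here Lemma \ref{lemmaAfullrow} provides the bookkeeping for the relevant dimensions, since the points and $k$-spaces of $\PG(n,q)$ form a $2$-design.)

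Next come the two spectral equivalences (5) $\Leftrightarrow$ (6) and (2) $\Leftrightarrow$ (3), both resting on Lemma \ref{lemma2}. If $\chi=\alpha j+v\in V_0\perp V_1$ with $v\in V_1$, then $A_i\chi=\alpha P_{0,i}\,j+P_{1,i}v=P_{1,i}\chi+\alpha(P_{0,i}-P_{1,i})j$, so $(A_i\chi)(\pi)$ takes only two values, according to whether $\pi\in\mathcal L$; substituting the eigenvalue formulas of Lemma \ref{eigenvallem} (together with the valency $P_{0,i}$, the $j=0$ case of that lemma) into these two values gives the two displayed quantities of (6), and the instance $i=k+1$ is (3). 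For (3) I would alternatively, and more cleanly, combine Lemma \ref{lemmaaantaldisjunct} (extended to all of $\mathbb{R}^{\Pi_k}$ by linearity, using $\chi_{\mathcal Z}=K\chi_\pi$ and $j^{T}\chi=x\qbin{n}{k}$) with the $K$-invariance of $V_0\perp V_1$: the vector $K\chi-q^{k^{2}+k}\qbin{n-k-1}{k}(xj-\chi)$ then lies in both $V_0\perp V_1$ and $\ker A=(V_0\perp V_1)^{\perp}$, hence is zero, which is exactly statement (3). Conversely, (6) for one fixed $i$ reads $A_i\chi=(a-b)\chi+bj$ for the indicated constants, and (3) reads $K\chi=-q^{k^{2}+k}\qbin{n-k-1}{k}(\chi-xj)$; decomposing $\chi=\sum_{m}\chi_m$ into $V_m$-components and applying $A_i$ (resp.\ $K$) gives $(P_{m,i}-(a-b))\chi_m=0$ for all $m\ge1$. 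A short computation identifies $a-b=P_{1,i}$ (resp.\ $-q^{k^{2}+k}\qbin{n-k-1}{k}=P_{1,k+1}$), and Lemma \ref{lemma2} guarantees $P_{m,i}\neq P_{1,i}$ for $m\ge2$; hence $\chi_m=0$ for $m\ge2$, i.e.\ $\chi\in V_0\perp V_1$. It is precisely here that the sharp separation in Lemma \ref{lemma2}, rather than the weaker inequality $|P_{ji}|\le|P_{1i}|$, is needed.

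For (2) $\Leftrightarrow$ (7): if $(\mathcal R,\mathcal R')$ is a pair of conjugate switching sets then $A\chi_{\mathcal R}=A\chi_{\mathcal R'}$ (both equal the characteristic vector of the common point set), so $\chi_{\mathcal R}-\chi_{\mathcal R'}\in\ker A$, and (2) immediately gives $|\mathcal L\cap\mathcal R|=\chi^{T}\chi_{\mathcal R}=\chi^{T}\chi_{\mathcal R'}=|\mathcal L\cap\mathcal R'|$. The substantial direction is the converse: one must show that the vectors $\chi_{\mathcal R}-\chi_{\mathcal R'}$ span $\ker A$, for then $\chi$, being orthogonal to all of them, lies in $\im(A^{T})$. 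I would obtain this from the small switching sets given by a $k$-regulus and its opposite regulus inside the $(2k+1)$-dimensional subspaces of $\PG(n,q)$, reducing subspace by subspace to the situation in $\PG(2k+1,q)$, where this spanning statement is available (cf.\ \cite{CLkclas}); establishing this spanning fact is the step I expect to be the main obstacle.

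Finally, suppose $\PG(n,q)$ has a $k$-spread. For (5) $\Rightarrow$ (8): every $k$-spread $\mathcal S$ satisfies $A\chi_{\mathcal S}=j$, and writing $\chi=\alpha j+v$ with $v\in V_1\subseteq\im(A^{T})$ one has $v^{T}\chi_{\mathcal S}=0$ (write $v=A^{T}y$; then $v^{T}\chi_{\mathcal S}=y^{T}j$, while $0=j^{T}v=(Aj)^{T}y$ forces $j^{T}y=0$ since $Aj$ is a nonzero constant vector), so $|\mathcal L\cap\mathcal S|=\alpha\,\tfrac{q^{n+1}-1}{q^{k+1}-1}=x$. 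For (8) $\Rightarrow$ (2) I would use a Desarguesian $k$-spread $\mathcal S$ meeting a suitable $(2k+1)$-subspace $\Sigma$ in a $k$-regulus $\mathcal R=\mathcal S\cap\Sigma$: replacing $\mathcal R$ by its opposite regulus $\mathcal R'$ produces another $k$-spread $\mathcal S'=(\mathcal S\setminus\mathcal R)\cup\mathcal R'$, and $|\mathcal L\cap\mathcal S|=|\mathcal L\cap\mathcal S'|=x$ forces $|\mathcal L\cap\mathcal R|=|\mathcal L\cap\mathcal R'|$; combining this, over all such $\Sigma$ and up to $\PGL(n+1,q)$, with the spanning fact of the previous paragraph again yields $\chi\perp\ker A$, closing the circle.
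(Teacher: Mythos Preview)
Your handling of the equivalences among (1)--(6) and of the forward implications $(2)\Rightarrow(7)$, $(5)\Rightarrow(8)$ is essentially the paper's: route everything through $\im(A^{T})=V_0\perp V_1$, and use Lemma~\ref{lemma2} to force the $V_m$-components with $m\ge 2$ to vanish once one $A_i$-equation is known. (One remark: your invocation of Lemma~\ref{lemmaAfullrow} in the $(5)\Leftrightarrow(4)$ step is not needed there; the paper uses that lemma elsewhere, see below.)

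Where you and the paper part ways is in $(7)\Rightarrow(1)$ and $(8)\Rightarrow(\cdot)$, precisely at the step you flag as the main obstacle. The paper never proves that regulus differences span $\ker A$. For $(8)\Rightarrow(3)$ it gives a straight double count: by transitivity of $\PGL(n+1,q)$ on ordered pairs of disjoint $k$-spaces, the number $n_i$ of $k$-spreads through $i$ fixed pairwise disjoint $k$-spaces depends only on $i$; counting pairs $(\pi',\mathcal S)$ with $\pi'\in\mathcal L$ and $\pi,\pi'\in\mathcal S$ in two ways yields that the number of elements of $\mathcal L$ skew to $\pi$ is $(x-\chi(\pi))\,n_1/n_2=(x-\chi(\pi))q^{k(k+1)}\qbin{n-k-1}{k}$. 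For $(7)\Rightarrow(1)$ it first treats $n=2k+1$: since $\mathcal S_1\setminus\mathcal S_2$ and $\mathcal S_2\setminus\mathcal S_1$ are conjugate switching sets for any two spreads, all spreads meet $\mathcal L$ in a constant, and an $n_0/n_1$ double count shows this constant is $x$, giving (8) and hence the rest. For $n>2k+1$ it works \emph{locally}: in every $(2k+1)$-subspace $\Pi$ the previous case gives $\chi_{|\Pi}\in\im(A_{|\Pi}^{T})$; because $A_{|\Pi}$ has full row rank (Lemma~\ref{lemmaAfullrow}, applied to the $2$-design of points and $k$-spaces in $\Pi$), the coefficient vector $y_{\Pi}$ with $A_{|\Pi}^{T}y_{\Pi}=\chi_{|\Pi}$ is unique, and the same full-row-rank argument in $\Pi\cap\Pi'$ forces $y_{\Pi}$ and $y_{\Pi'}$ to agree on common points. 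These local coefficients therefore patch to a global $y$ with $A^{T}y=\chi$. So the paper gets by with exactly the switching sets you had in mind (those inside $(2k+1)$-subspaces), but replaces the spanning claim by this uniqueness-and-patching argument; your route would also work if the spanning fact were supplied, but as written that step is the genuine gap.
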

\begin{proof}
We first prove that properties $1,2,3,4,5,6$ are equivalent by  proving the following implications:
\begin{itemize}
\item $1 \Leftrightarrow 2$: This follows since $\im(B^T)$ $= \ker(B)^\perp$ for every matrix $B$.

\item $2 \Rightarrow 3$: We assume that $\chi \in \ker(A)^\perp$. Let $\pi \in \Pi_k$ and $\mathcal{Z}$ the set of $k$-spaces disjoint from $\pi$. By Lemma \ref{lemmaaantaldisjunct}, we know that \begin{align*}
&\chi_\mathcal{Z}^{} -q^{k^2+k}\begin{bmatrix}n-k-1\\k\end{bmatrix} \left(\begin{bmatrix}
n\\k\end{bmatrix}^{-1} \bm{j} -\chi_\pi \right) \in \ker(A).
\end{align*}
Since $\chi \in \ker(A)^\perp$, this implies
\begin{align*}
 & \ \chi_\mathcal{Z}^{} \cdot \chi -q^{k^2+k}\begin{bmatrix}n-k-1\\k\end{bmatrix} \left(\begin{bmatrix}
n\\k\end{bmatrix}^{-1} \bm{j} \cdot \chi -\chi_\pi \cdot \chi \right)=0 \\
\Leftrightarrow & \ |\mathcal{Z}\cap \mathcal{L}| -q^{k^2+k}\begin{bmatrix}n-k-1\\k\end{bmatrix} \left(\begin{bmatrix}
n\\k\end{bmatrix}^{-1} |\mathcal{L}| - \chi(\pi) \right)=0 \\
\Leftrightarrow & \ |\mathcal{Z}\cap \mathcal{L}| =(x-\chi(\pi)) q^{k^2+k}\begin{bmatrix}n-k-1\\k\end{bmatrix}\;.
\end{align*}
The last equality shows that the number of elements of $\mathcal{L}$ disjoint from $\pi$ is $(x-\chi(\pi)) q^{k^2+k}\qbin{n-k-1}{k}$.
\item $3 \Rightarrow 4$: By expressing property $3$ in vector notation, we find that $K\chi = (x\bm{j}-\chi)\qbin{n-k-1}{k}q^{k^2+k}$ and since by Lemma \ref{lemmadisjunct} we have $K\bm{j} = q^{(k+1)^2}\qbin{n-k}{k+1}$, we see that $v=\chi -x\frac{q^{k+1}-1}{q^{n+1}-1} \bm{j}$ is an eigenvector of $K$:
\begin{align*}
Kv=& \ K\left(\chi -x\frac{q^{k+1}-1}{q^{n+1}-1} \bm{j} \right) \\
=& \ (x \bm{j} - \chi)\qbin{n-k-1}{k} q^{k^2+k} - x\frac{q^{k+1}-1}{q^{n+1}-1}q^{(k+1)^2}\begin{bmatrix} n-k \\ k+1 \end{bmatrix} \bm{j}\\ 
=& \ \qbin{n-k-1}{k}q^{k^2+k} \left( x \bm{j} -\chi -x \frac{q^{n+1}-q^{k+1}}{q^{n+1}-1} \bm{j} \right)\\
=& \ -\qbin{n-k-1}{k}q^{k^2+k} \left( \chi -x \frac{q^{k+1}-1}{q^{n+1}-1} \bm{j} \right) \\
=& \ P_{1,k+1} v\;.
\end{align*} 
By Lemma \ref{lemma2} for $i=k+1$, we know that $v\in V_1$. 
\item $4 \Rightarrow 5$: This follows since $V_0=\langle \bm{j} \rangle$ (see Lemma \ref{lemmaAgelijkaanV_0V_1}).
\item $5 \Rightarrow 1$: This follows from Lemma \ref{lemmaAgelijkaanV_0V_1}.
\end{itemize}


\begin{itemize}
\item $4 \Rightarrow 6$: Denote $\chi - x\frac{q^{k+1}-1}{q^{n+1}-1} \bm{j}$ by $v$.
The matrix $A_i$ corresponds to the relation $R_i$.
This implies that $(A_i \chi)_\pi$ gives the number of $k$-spaces in $\mathcal{L}$ that intersect $\pi$ in a $(k-i)$-space.
\begin{align*}
A_i \chi &= A_iv+x\frac{q^{k+1}-1}{q^{n+1}-1}A_i \bm{j}= P_{1i}v+x\frac{q^{k+1}-1}{q^{n+1}-1}P_{0i}\bm{j} \\
&= \left( - \begin{bmatrix}n-k-1 \\i-1 \end{bmatrix}\begin{bmatrix}k+1 \\ i\end{bmatrix}q^{i(i-1)}+\begin{bmatrix}n-k \\ i\end{bmatrix}\begin{bmatrix}k \\i \end{bmatrix}q^{i^2} \right) \left(\chi - x\frac{q^{k+1}-1}{q^{n+1}-1} \bm{j}\right) \\&+ x\frac{q^{k+1}-1}{q^{n+1}-1} \begin{bmatrix}n-k\\i\end{bmatrix} \begin{bmatrix}k+1 \\ i\end{bmatrix} q^{i^2} \bm{j}\\
&= \left( \begin{bmatrix}n-k \\ i\end{bmatrix}\begin{bmatrix} k\\i \end{bmatrix}q^{i^2}-\begin{bmatrix}k+1 \\ i\end{bmatrix}\begin{bmatrix}
n-k-1 \\i-1 \end{bmatrix}q^{i(i-1)} \right) \chi \\
&+ x\qb{k+1}{n+1}q^{i(i-1)}\left( 
\qbin{n-k-1}{i-1}\qbin{k+1}{i}-\qbin{n-k}{i}\qbin{k}{i}q^i + \qbin{n-k}{i}\qbin{k+1}{i}q^i \right) \bm{j}\\
&= \left( \begin{bmatrix}n-k \\ i\end{bmatrix}\begin{bmatrix} k\\i \end{bmatrix}q^{i^2}-\begin{bmatrix}k+1 \\ i\end{bmatrix}\begin{bmatrix}
n-k-1 \\i-1 \end{bmatrix}q^{i(i-1)} \right) \chi \\
&+ x\qb{k+1}{n+1}q^{i(i-1)}\qbin{n-k-1}{i-1} \qbin{k}{i} \left( 
\qb{k+1}{k-i+1}-\qb{n-k}{i}q^i\left(1-\qb{k+1}{k-i+1}\right) \right) \bm{j}\\
&= \left( \begin{bmatrix}n-k \\ i\end{bmatrix}\begin{bmatrix} k\\i \end{bmatrix}q^{i^2}-\begin{bmatrix}k+1 \\ i\end{bmatrix}\begin{bmatrix}
n-k-1 \\i-1 \end{bmatrix}q^{i(i-1)} \right) \chi + x\begin{bmatrix}n-k-1 \\ i-1\end{bmatrix}\begin{bmatrix}k+1\\i\end{bmatrix}q^{i(i-1)}\bm{j}
\end{align*} Remark that this proves the implication for every $i \in \{1,\dots, k+1\}$.

\item $6 \Rightarrow 4$: We follow the approach of Lemma $3.5$ in \cite{CLkclas} where we look for an eigenvalue of $A_i$ and we define $\beta_i = x\qbin{k+1}{i} \qbin{n-k-1}{i-1}q^{i(i-1)}$. \\
From property $6$ we know that
\begin{align*}
A_i \chi &= x\begin{bmatrix}k+1 \\ i\end{bmatrix}\begin{bmatrix}n-k-1\\i-1\end{bmatrix}q^{i(i-1)} (\bm{j}-\chi)  \\ &+ \left( (x-1) \frac{q^{k+1}-1}{q^{k-i+1}-1}+q^{i}\frac{q^{n-k}-1}{q^{i}-1}\right) q^{i(i-1)}\begin{bmatrix}
n-k-1 \\ i-1 \end{bmatrix} \begin{bmatrix} k\\i \end{bmatrix} \chi \\
&=\left( \begin{bmatrix}n-k \\ i\end{bmatrix}\begin{bmatrix} k\\i \end{bmatrix}q^{i^2}-\begin{bmatrix}k+1 \\ i\end{bmatrix}\begin{bmatrix}
n-k-1 \\i-1 \end{bmatrix}q^{i(i-1)} \right) \chi + x\begin{bmatrix}n-k-1 \\ i-1\end{bmatrix}\begin{bmatrix}k+1\\i\end{bmatrix}q^{i(i-1)}\bm{j}\\
&=P_{1i} \chi + \beta_i \bm{j}\;.
\end{align*}
Then we can see that $v_i = \chi + \frac{\beta_i}{P_{1i} - P_{0i}}\bm{j}$ is an eigenvector for $A_i$ with eigenvalue $P_{1i}$:
\begin{align*}
 A_i\left(\chi + \frac{\beta_i}{P_{1i} - P_{0i}}\bm{j}\right) =& P_{1i} \chi + \beta_i \bm{j} + \frac{\beta_i}{P_{1i} - P_{0i}}P_{0i} \bm{j} \\
 =& P_{1i} \left(\chi + \frac{\beta_i}{P_{1i} - P_{0i}}\bm{j}\right).
 \end{align*}
By Lemma \ref{lemma2} we know that $\chi + \frac{\beta_i}{P_{1i} - P_{0i}}\bm{j} = \chi -x\frac{q^{k+1}-1}{q^{n+1}-1} \bm{j}   \in V_1$.
\end{itemize}
We show that properties $8$ and $9$ are equivalent with the previous properties if $\PG(n,q)$ admits a $k$-spread.
\begin{itemize}
\item $2 \Rightarrow 8$: Let $\mathcal{S}$ be a $k$-spread in $\PG(n,q)$ and $\chi_\mathcal{S}^{}$ its characteristic vector. Then we know that $\chi_\mathcal{S}^{}-{\qbin nk}^{-1}\bm{j} \in \ker(A)$. 
Since $\chi \in \ker(A)^\perp$ we have that \[0=\chi\cdot \left(\chi_\mathcal{S}^{}-{\qbin nk}^{-1}\bm{j} \right) = |\mathcal{L}\cap \mathcal{S}|-|\mathcal{L}|{\qbin nk}^{-1},\] so $|\mathcal{L}\cap \mathcal{S}| = |\mathcal{L}|{\qbin nk}^{-1} =x$. 
\item $8 \Rightarrow 9$: Trivial. 
\item $9 \Rightarrow 3$: Suppose that $\PG(n,q)$ contains $k$-spreads, hence also regular $k$-spreads. We know that the group PGL$(n+1,q)$ acts transitively on the pairs of pairwise disjoint $k$-spaces. Let $n_i$, for $i=1,2$, be the number of regular $k$-spreads that contain $i$ fixed pairwise disjoint $k$-spaces. This number only depends on $i$, and not on the chosen $k$-spaces. \\
Let $\pi$ be a fixed $k$-space. The number of pairs $(\pi',\mathcal{S})$, with $\mathcal{S}$ a regular $k$-spread that contains $\pi$ and $\pi'$ is equal to $q^{(k+1)^2}\qbin{n-k}{k+1} \cdot n_2 = n_1 \cdot \left(\frac{q^{n+1}-1}{q^{k+1}-1}-1\right)$, so $n_1/n_2 = q^{k(k+1)}\qbin{n-k-1}{k}$.\\
By counting the number of pairs $(\pi',\mathcal{S})$, with $\pi'\in \mathcal{L}$ and $\mathcal{S}$ a regular $k$-spread that contains $\pi$ and $\pi'$, we find that the number of $k$-spaces in $\mathcal{L}$, disjoint from a fixed $k$-space $\pi$, is given by $(x-\chi(\pi))n_1/n_2 = (x-\chi(\pi))q^{k(k+1)}\qbin{n-k-1}{k}$.
\end{itemize}
To end this proof, we show that property $7$ is equivalent with the other properties.
\begin{itemize}
\item $2 \Rightarrow 7$: Let $\chi_\mathcal{R}^{}$ and $\chi_{\mathcal{R}'}^{}$ be the characteristic vectors of the pair of conjugate switching sets $\mathcal{R}$ and $\mathcal{R}'$ respectively. As $\mathcal{R}$ and $\mathcal{R}'$ cover the same set of points, we find: $\chi_\mathcal{R}^{}-\chi_{\mathcal{R}'}^{} \in \ker(A)$. This implies $0=\chi \cdot (\chi_\mathcal{R}^{}-\chi_{\mathcal{R}'}^{})=\chi \cdot \chi_\mathcal{R}^{}-\chi \cdot \chi_{\mathcal{R}'}^{}$, so that $\chi \cdot \chi_\mathcal{R}^{}=|\mathcal{L} \cap \mathcal{R}| = |\mathcal{L} \cap \mathcal{R}'|=\chi \cdot \chi_{\mathcal{R}'}^{}$.
\item $7 \Rightarrow 1$:  We first show that property $7$ implies the other properties if $n=2k+1$. 
For any two $k$-spreads $\mathcal{S}_1,\mathcal{S}_2$, 
the sets $\mathcal{S}_1 \setminus \mathcal{S}_2$ and $\mathcal{S}_2 \setminus \mathcal{S}_1 $ form a pair of conjugate switching sets. So $|\mathcal{L} \cap (\mathcal{S}_1 \setminus \mathcal{S}_2)|=|\mathcal{L}\cap (\mathcal{S}_2 \setminus \mathcal{S}_1 )|$, which implies that $|\mathcal{L} \cap \mathcal{S}_1|=|\mathcal{L}\cap \mathcal{S}_2|=c$.

Now we prove that this constant $c$ equals $x=|\mathcal{L}|\qbin{2k+1}{k}^{-1}$. Let $n_i$, for $i=0,1$, be the number of $k$-spreads containing $i$ fixed pairwise disjoint $k$-spaces. This number only depends on $i$, and not on the chosen $k$-spaces. The number of pairs $(\pi,\mathcal{S})$, with $\mathcal{S}$ a $k$-spread that contains $\pi$, is equal to $\qbin{2k+2}{k+1} \cdot n_1 = n_0 \cdot \frac{q^{2k+2}-1}{q^{k+1}-1}$, which implies that $n_0/n_1 = \qbin{2k+1}{k}$.

By counting the number of pairs $(\pi,\mathcal{S})$, with $\mathcal{S}$ a $k$-spread that contains $\pi$, and where $\pi\in \mathcal{L}$, we find, that the number of $k$-spaces in $\mathcal{L}\cap \mathcal{S}$ equals $|\mathcal{L}| n_1/n_0=|\mathcal{L}| \qbin{2k+1}{k} ^{-1}=x$. This implies property $8$, and hence, property $1$.

Now we prove that implication $7 \Rightarrow 1$ also holds if $n>2k+1$.
 Given a subspace $\tau$ in $\PG(n,q)$, we will use the notation $A_{|\tau}$ for the submatrix of $A$, where we only have the rows, corresponding with the points of $\tau$, and the columns corresponding with the $k$-spaces in $\tau$. We know that the matrix $A_{|\tau}$ has full rank by  Lemma \ref{lemmaAfullrow}.\\
Let $\Pi$ be a $(2k+1)$-dimensional subspace in $\PG(n,q)$. By property $7$, we know that for every two $k$-spreads $\mathcal{R},\mathcal{R}'$ in $\Pi$, we have $|\mathcal{L}\cap \mathcal{R}|=|\mathcal{L}\cap \mathcal{R}'|$ since $\mathcal{R}\setminus \mathcal{R}'$ and $\mathcal{R}'\setminus \mathcal{R}$ are conjugate switching sets. This implies that $\chi_{\mathcal{L}|\Pi}^{} \in \im\left(A_{|\Pi}^T\right)$ by the arguments above applied for the $(2k+1)$-space $\Pi$. So, there is a linear combination of the rows of $A_{|\Pi}$ equal to $\chi_{\mathcal{L}|\Pi}^{}$. This linear combination is unique since $A_{|\Pi}$ has full row rank.
Now we will show that the linear combination of $\chi_\mathcal{L}^{}$ is uniquely defined by the vectors $\chi_{\mathcal{L}|\Pi}^{}$, with $\Pi$ varying over all $(2k+1)$-spaces in $\PG(n,q)$.

We show, for every two $(2k+1)$-spaces $\Pi,\Pi'$, that the coefficients of the row corresponding to a point in $\Pi \cap \Pi'$ in the linear combination of $\chi_{\mathcal{L}|\Pi}^{}$ and in the linear combination of $\chi_{\mathcal{L}|\Pi'}^{}$ are equal.

Suppose $\chi_{\mathcal{L}|\Pi}^{} = a_1r_1+a_2r_2+\dots +a_lr_l+a_{l+1}r_{l+1}+\dots +a_mr_m$ and $\chi_{\mathcal{L}|\Pi'}^{} = b_{l+1}r_{l+1}+\dots +b_{m}r_{m}+b_{m+1}r_{m+1}+\dots +b_sr_s$, where $r_1, \dots, r_l, \dots ,r_m$ and $r_{l+1}, \dots, r_m, \dots ,r_s$ are the rows corresponding with the points of $\Pi$ and $\Pi'$, respectively. Remark that we only look at the columns corresponding with the $k$-spaces in $\Pi$ and $\Pi'$, respectively. 

We now look at the space $\Pi \cap \Pi'$, and at the corresponding columns in $A$.
Recall that $A_{|\Pi\cap\Pi'}$ also has full row rank, so the linear combination that gives $\chi_{\mathcal{L}|(\Pi\cap\Pi')}^{}$ is unique, and equal to the ones corresponding with $\Pi$ and $\Pi'$, restricted to $\Pi \cap \Pi'$. This proves that $a_i=b_i$ for $l+1 \leq i \leq m$. Here we also used the fact that the entry in $A$ corresponding with a point of $\Pi \setminus \Pi'$ or $\Pi' \setminus \Pi$ and a $k$-space in $\Pi \cap \Pi'$ is zero.

By using all $(2k+1)$-spaces, we see that $\chi_\mathcal{L}^{}$ is uniquely defined, and by construction we have $\chi_\mathcal{L}^{} \in \im(A^T)$. Note that we only used that property $7$ holds for conjugate switching sets inside a $(2k+1)$-dimensional subspace. \qedhere
\end{itemize}
\end{proof}
  
\begin{definition}
A set $\mathcal{L}$ of $k$-spaces in $\PG(n,q)$ that fulfills one of the statements in Theorem \ref{theodef} (and consequently all of them) is called a \emph{Cameron-Liebler set of $k$-spaces} in $\PG(n,q)$ with parameter $x=|\mathcal{L}|{\qbin nk}^{-1}$.
\end{definition}

\begin{remark}
Cameron-Liebler sets of $k$-spaces in $\PG(n,q)$ were introduced before in \cite{Ferdinand.} as we mentioned in the introduction. Remark that the definition we present here is consistent with the definition in \cite{Ferdinand.} since the definition given in that article is property 5.~from the previous theorem.
\end{remark}

Note that the parameter of a Cameron-Liebler set of $k$-spaces in $\PG(n,q)$ is not necessarily an integer, while the parameter of Cameron-Liebler line sets in $\PG(3,q)$ and the parameter of Cameron-Liebler sets of generators in polar spaces are integers (see \cite[Theorem 4.8]{CLpol}).

We end this section with showing an extra property of Cameron-Liebler sets of $k$-spaces in $\PG(n,q)$. 

\begin{lemma}
Let $\mathcal{L}$ be a Cameron-Liebler set of $k$-spaces in $\PG(n,q)$, then we find the following equality for every  $j$-dimensional subspace $\alpha$ and every $i$-dimensional subspace $\tau$, with $\alpha \subset \tau$ and $j< k<i$:
\begin{align*}
|[k]_\alpha \cap \mathcal{L}|+\frac{\qbin{n-j-1}{k-j}(q^{k-j}-1)}{\qbin{i}{k}(q^{i-k}-1)}|[k]^\tau \cap \mathcal{L}|=\frac{\qbin{n-j-1}{k-j}}{\qbin{i-j-1}{k-j}}|[k]^\tau_\alpha \cap \mathcal{L}|+\frac{\qbin{n-j-1}{k-j-1}}{\qbin nk}| \mathcal{L}|\;.
\end{align*}
Here $[k]_\alpha $, $[k]^\tau$ and $[k]^\tau_\alpha$ denote the set of all $k$-spaces through $\alpha$, the set of all $k$-spaces in $\tau$ and the set of all $k$-spaces in $\tau$ through $\alpha$, respectively.
\end{lemma}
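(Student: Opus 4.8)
The plan is to use statement $1$ of Theorem \ref{theodef}: since $\mathcal{L}$ is a Cameron-Liebler set, $\chi\in\im(A^{T})$, so $\chi = A^{T}w$ for some real vector $w=(w_{Q})_{Q}$ indexed by the points of $\PG(n,q)$. Explicitly, $\chi(\sigma)=\sum_{Q\in\sigma}w_{Q}$ for every $k$-space $\sigma$. Summing $\chi(\sigma)=\sum_{Q\in\sigma}w_{Q}$ over all $k$-spaces $\sigma$ and using that each point lies on $\qbin{n}{k}$ $k$-spaces, we get $|\mathcal{L}|=\qbin{n}{k}\sum_{Q}w_{Q}$, hence $\sum_{Q}w_{Q}=x$. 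Put $W_{\tau}=\sum_{Q\in\tau}w_{Q}$.

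The next step is to rewrite each of the four quantities occurring in the identity as a linear combination of $w_{P}$, $W_{\tau}$ and $x$. The only facts needed are the standard subspace counts, obtained by passing to a quotient space: a point lies on $\qbin{n}{k}$ $k$-spaces of $\PG(n,q)$ and on $\qbin{i}{k}$ $k$-spaces of $\tau$, while a pair of distinct points lies on $\qbin{n-1}{k-1}$ $k$-spaces of $\PG(n,q)$ and on $\qbin{i-1}{k-1}$ $k$-spaces of $\tau$. Summing $\chi(\sigma)=\sum_{Q\in\sigma}w_{Q}$ over the relevant families of $k$-spaces, and using the $q$-Pascal identity $\qbin{m}{k}=\qbin{m-1}{k-1}+q^{k}\qbin{m-1}{k}$, one obtains
\begin{align*}
|[P]_{k}\cap\mathcal{L}| &= q^{k}\qbin{n-1}{k}\,w_{P}+\qbin{n-1}{k-1}x,\\
|[\tau]_{k}\cap\mathcal{L}| &= \qbin{i}{k}\,W_{\tau},\\
|[P,\tau]_{k}\cap\mathcal{L}| &= q^{k}\qbin{i-1}{k}\,w_{P}+\qbin{i-1}{k-1}\,W_{\tau},\\
|\mathcal{L}| &= \qbin{n}{k}x.
\end{align*}

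Finally, I would substitute these expressions into both sides of the asserted identity and check that everything cancels. The terms in $w_{P}$ cancel immediately, with coefficient $q^{k}\qbin{n-1}{k}$ on each side; the terms in $W_{\tau}$ cancel by the absorption identity $\qbin{i}{k}\frac{q^{k}-1}{q^{i}-1}=\qbin{i-1}{k-1}$; and the remaining terms cancel by $\qbin{n}{k}\frac{q^{k}-1}{q^{n}-1}=\qbin{n-1}{k-1}$. The hypothesis $i\geq k+1$ enters only to ensure $\qbin{i-1}{k}\neq 0$, so that the fractions appearing in the statement are well defined. I do not anticipate a genuine obstacle here: once $\chi$ is written as $A^{T}w$, the identity reduces to elementary manipulations of Gaussian binomial coefficients, and the only points requiring care are getting the incidence counts right and not confusing $W_{\tau}$ with $x=\sum_{Q}w_{Q}$.
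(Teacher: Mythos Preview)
Your argument is correct. It is the dual of the paper's proof: the paper defines the vector
\[
v=\chi_{[P]}+\frac{\qbin{n-1}{k}(q^k-1)}{\qbin{i-1}{k}(q^i-1)}\chi_{[\tau]}-\frac{\qbin{n-1}{k}}{\qbin{i-1}{k}}\chi_{[P,\tau]}-\frac{q^k-1}{q^n-1}j,
\]
checks $(Av)_{P'}=0$ for every point $P'$, and then invokes $\chi\in(\ker A)^{\perp}$ (statement~2 of Theorem~\ref{theodef}) to get $\chi\cdot v=0$. You instead invoke the equivalent statement~1, write $\chi=A^{T}w$, and evaluate the four inner products $\chi\cdot\chi_{[P]}$, $\chi\cdot\chi_{[\tau]}$, $\chi\cdot\chi_{[P,\tau]}$, $\chi\cdot j$ directly in terms of $w_{P}$, $W_{\tau}$ and $x$. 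The incidence counts needed (number of $k$-spaces through one or two given points, globally or inside $\tau$) are identical in both versions; your packaging has the minor advantage of making those counts and the resulting Gaussian-binomial cancellations explicit, whereas the paper leaves the verification of $Av=0$ to the reader.
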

\begin{proof}
Let $\chi_{[\alpha]}$, $\chi_{[\tau]}$ and $\chi_{[\alpha,\tau]}$ be the characteristic vectors of $[k]_\alpha$, $[k]^\tau$ and $[k]^\tau_\alpha$, respectively, and define
\[
v=\chi_{[\alpha]}+\frac{\qbin{n-j-1}{k-j}(q^{k-j}-1)}{\qbin{i}{k}(q^{i-k}-1)} \chi_{[\tau]}-\frac{\qbin{n-j-1}{k-j}}{\qbin{i-j-1}{k-j}} \chi_{[\alpha,\tau]} -\frac{\qbin{n-j-1}{k-j-1}}{\qbin nk}\bm{j}\;.
\]
By calculating $(Av)_{P'}$ for every point $P'$, we see that $Av=0$. This implies that $v\in \ker(A)$. Let $\chi$ be the characteristic vector of $\mathcal{L}$. By Definition 2 in Theorem \ref{theodef} we know that $\chi \in \ker(A)^\perp$, so by calculating $\chi \cdot v$ the lemma follows.
\end{proof}


For $k=1$, Drudge showed in \cite{phdDrudge} that this property is an equivalent definition for a Cameron-Liebler line set in $\PG(n,q)$. For $k>1$ we pose it as an open problem to show that this property is also an equivalent definition.

\section{Properties of Cameron-Liebler sets of \texorpdfstring{$k$}{k}-spaces in \texorpdfstring{$\PG(n,q)$}{PG(n,q)}}\label{sec3}

We start with some properties of Cameron-Liebler sets of $k$-spaces in $\PG(n,q)$ that can easily be proved.
\begin{lemma} \label{basislemma4}
Let $\mathcal{L}$ and $\mathcal{L}'$ be two Cameron-Liebler sets of $k$-spaces in $\PG(n,q)$ with parameters $x$ and $x'$ respectively, then the following statements are valid.
\begin{enumerate}
\item $0 \leq x \leq \qb{n+1}{k+1}$.
\item The set of all $k$-spaces in $\PG(n,q)$ not in $\mathcal{L}$ is a Cameron-Liebler set of $k$-spaces with parameter $\qb{n+1}{k+1}-x$.
\item If $\mathcal{L} \cap \mathcal{L}' = \emptyset$, then $\mathcal{L} \cup \mathcal{L}'$ is a Cameron-Liebler set of $k$-spaces with parameter $x+x'$.
\item  If $\mathcal{L}' \subseteq \mathcal{L}$, then $\mathcal{L} \setminus \mathcal{L}'$ is a Cameron-Liebler set of $k$-spaces with parameter $x-x'$.
\end{enumerate}
\end{lemma}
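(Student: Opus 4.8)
The plan is to deduce all four statements from the characterization in Theorem~\ref{theodef}, using property~5 (equivalently property~1): a non-empty set of $k$-spaces is Cameron-Liebler exactly when its characteristic vector lies in the subspace $V_0\perp V_1$. Since $V_0\perp V_1$ is a linear subspace and contains $j$ (because $V_0=\langle j\rangle$ by Lemma~\ref{lemmaAgelijkaanV_0V_1}), this property is stable under the vector operations underlying complementation, disjoint union and set-difference of the sets involved. The parameters then come out of elementary cardinality bookkeeping, together with the identity $\qbin{n+1}{k+1}\qbin nk^{-1}=\qb{n+1}{k+1}$, obtained by cancelling the common factors of the two Gaussian binomials.

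Concretely: for part~1 I would note $x=|\mathcal{L}|\qbin nk^{-1}\ge 0$ immediately, and for the upper bound use that there are exactly $\qbin{n+1}{k+1}$ $k$-spaces in $\PG(n,q)$, so $|\mathcal{L}|\le\qbin{n+1}{k+1}$ and hence $x\le\qb{n+1}{k+1}$. For part~2 the characteristic vector of the complement is $j-\chi$, which lies in $V_0\perp V_1$ since $\chi$ does and $j\in V_0$; its cardinality is $\qbin{n+1}{k+1}-x\qbin nk$, giving parameter $\qb{n+1}{k+1}-x$. For part~3, since $\mathcal{L}\cap\mathcal{L}'=\emptyset$ the characteristic vector of $\mathcal{L}\cup\mathcal{L}'$ is $\chi+\chi'\in V_0\perp V_1$, and $|\mathcal{L}\cup\mathcal{L}'|=(x+x')\qbin nk$. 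For part~4 (read with the containment that makes the set-difference meaningful, i.e.\ $\mathcal{L}'\subseteq\mathcal{L}$), the characteristic vector of $\mathcal{L}\setminus\mathcal{L}'$ is $\chi-\chi'\in V_0\perp V_1$, with cardinality $(x-x')\qbin nk$. In each case property~5 of Theorem~\ref{theodef} identifies the resulting set as a Cameron-Liebler set with the stated parameter.

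There is no genuine obstacle here; the one point that deserves an explicit remark is that Theorem~\ref{theodef} is phrased for non-empty sets, so in parts~2 and~4 one should dispose separately of the degenerate cases where the constructed set is empty (namely $\mathcal{L}$ the full set of $k$-spaces in part~2, or $\mathcal{L}'=\mathcal{L}$ in part~4), observing that $\emptyset$ is Cameron-Liebler with parameter $0$, consistently with the formulas $\qb{n+1}{k+1}-x$ and $x-x'$ evaluating to $0$ in those cases. Everything else is routine linear algebra and counting.
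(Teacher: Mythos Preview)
Your proposal is correct. The paper does not actually give a proof of this lemma; it simply introduces it with the remark that these properties ``can easily be proved'' and moves on, so your argument via property~5 of Theorem~\ref{theodef} (closure of $V_0\perp V_1$ under the relevant vector operations, plus cardinality bookkeeping) is precisely the kind of routine verification the authors had in mind, and you have also caught the containment typo in part~4 and handled the empty-set boundary cases cleanly.
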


We present some examples of Cameron-Liebler $k$-sets in $\PG(n,q)$. 
\begin{example}\label{voorbeeldCL}
The set of all $k$-spaces through a point $P$ is a Cameron-Liebler set of $k$-spaces with parameter $1$ since the characteristic vector of this set is the row of $A$ corresponding to the point $P$. We will call this set of $k$-spaces the \emph{point-pencil through $P$}.\\
\end{example}

\begin{example}\label{voorbeeldCL2}
By property $3$ in Theorem \ref{theodef}, we can see that the set of all $k$-spaces in a fixed hyperplane is a Cameron-Liebler set of $k$-spaces in $\PG(n,q)$ with parameter $\frac{q^{n-k}-1}{q^{k+1}-1}$. 
Remark that this parameter is not an integer if $k+1 \nmid n+1$, or equivalently, if $\PG(n,q)$ does not contain a $k$-spread.
\end{example}

In \cite{Klaus} several properties of Cameron-Liebler sets of $k$-spaces in $\PG(2k+1,q)$ were given. We will first generalize some of these results to use them in Section \ref{nieuwhfdst}.

\begin{lemma} \label{driedisjunct}
Let $\pi$ and $\pi'$ be two disjoint $k$-spaces  in $\PG(n,q)$ with $\Sigma = \langle \pi,\pi' \rangle$, and let $P$ be a point in $\Sigma \setminus (\pi \cup \pi')$ and let $P'$ be a point not in $\Sigma$. Then the number of  $k$-spaces disjoint from $\pi$ and $\pi'$ equals $W(q,n,k)$, the number of $k$-spaces disjoint from $\pi$ and $\pi'$ through $P$ equals $W_\Sigma(q,n,k)$ and the number of $k$-spaces disjoint from $\pi$ and $\pi'$ through $P'$ equals $W_{\bar{\Sigma}}(q,n,k)$. 

Here, $W(q,n,k), W_{{\Sigma}}(q,n,k), W_{\bar{\Sigma}}(q,n,k)$ are given by:
\begin{align*}
W(q,n,k)&= \sum_{i=-1}^k  W_i(q,n,k) \\
W_\Sigma(q,n,k) &= \frac{1}{(q^{k+1}-1)^2} \sum_{i=0}^k W_i(q,n,k) (q^{i+1}-1)
\\
W_{\bar{\Sigma}}(q,n,k) &= \frac{1}{q^{n+1}-q^{2k+2}}  \sum_{i=-1}^{k-1} W_i(q,n,k) (q^{k+1}-q^{i+1})  \\
W_i(q,n,k)&=
\begin{cases}
q^{2k^2+k+ \frac{3i^2}{2}-\frac{i}{2}-3ik}\qbin{n-2k-1}{k-i}\qbin{k+1}{i+1}\prod_{j=0}^i   (q^{k-j+1}-1)  & \text{if } i \geq 0\\
q^{2(k+1)^2}\qbin{n-2k-1}{k+1} & \text{if } i=-1
\end{cases}\;.
\end{align*}
\end{lemma}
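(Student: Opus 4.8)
The plan is to classify the $k$-spaces $\sigma$ disjoint from $\pi$ and $\pi'$ by the dimension $i:=\dim(\sigma\cap\Sigma)$. Since $\Sigma$ is a $(2k+1)$-space containing $\pi$ and $\pi'$, the value $i$ ranges over $\{-1,0,\dots,k\}$ (it equals $k$ exactly when $\sigma\subseteq\Sigma$, and is at most $k-1$ otherwise), and I claim $W_i(q,n,k)$ counts precisely the $\sigma$ with $\dim(\sigma\cap\Sigma)=i$. For $i=-1$ this is immediate from Lemma~\ref{lemmadisjunct} with $m=2k+1$ and $j=k$, giving $q^{(2k+2)(k+1)}\qbin{n-2k-1}{k+1}=q^{2(k+1)^2}\qbin{n-2k-1}{k+1}$. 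For $i\geq 0$ the key observation is that, as $\pi,\pi'\subseteq\Sigma$, one has $\sigma\cap\pi=(\sigma\cap\Sigma)\cap\pi$ and likewise for $\pi'$; hence $\sigma$ is disjoint from $\pi$ and $\pi'$ if and only if $\mu:=\sigma\cap\Sigma$ is. So I would count pairs $(\mu,\sigma)$, where $\mu$ is an $i$-space of $\Sigma$ disjoint from both $\pi$ and $\pi'$ and $\sigma$ is a $k$-space with $\sigma\cap\Sigma=\mu$; the number of $\sigma$ for a given $\mu$ does not depend on $\mu$, by the transitivity of $\PGL(n+1,q)$ on incident pairs consisting of an $i$-space inside a $(2k+1)$-space.

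For the first factor I need the number of $i$-spaces of $\Sigma=\PG(2k+1,q)$ disjoint from the two complementary $k$-spaces $\pi,\pi'$: writing $\Sigma=\PG(U\oplus U')$ with $\pi=\PG(U)$, $\pi'=\PG(U')$, an $(i+1)$-dimensional subspace $M$ of $U\oplus U'$ with $M\cap U'=0$ is the graph of a linear map $\phi\colon W\to U'$ for a unique $(i+1)$-dimensional $W\leq U$, and $M\cap U=0$ holds exactly when $\phi$ is injective; this gives $\qbin{k+1}{i+1}\prod_{j=0}^{i}(q^{k+1}-q^{j})=q^{\binom{i+1}{2}}\qbin{k+1}{i+1}\prod_{j=0}^{i}(q^{k-j+1}-1)$. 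For the second factor I would pass to the quotient $\PG(n,q)/\mu\cong\PG(n-i-1,q)$, in which $\Sigma/\mu$ is a $(2k-i)$-space; the $k$-spaces $\sigma$ with $\sigma\cap\Sigma=\mu$ correspond bijectively to the $(k-i-1)$-spaces disjoint from $\Sigma/\mu$, of which there are $q^{(2k-i+1)(k-i)}\qbin{n-2k-1}{k-i}$ by Lemma~\ref{lemmadisjunct}. Multiplying the two factors, collecting the power of $q$ (the exponent $\binom{i+1}{2}+(2k-i+1)(k-i)$ simplifying to $2k^2+k+\tfrac{3i^2}{2}-\tfrac{i}{2}-3ik$), and summing over $i=-1,\dots,k$ yields $W(q,n,k)$.

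For the counts through a point I would argue by double counting, using that the stabiliser of the ordered pair $(\pi,\pi')$ in $\PGL(n+1,q)$ acts transitively both on the points of $\Sigma\setminus(\pi\cup\pi')$ and on the points of $\PG(n,q)\setminus\Sigma$ (standard: the first follows from the action of $\mathrm{GL}(U)\times\mathrm{GL}(U')$, the second from the elations with axis $\Sigma$). For each $i$ consider incident pairs (point,$\sigma$) with $\sigma$ disjoint from $\pi,\pi'$ and $\dim(\sigma\cap\Sigma)=i$: a fixed such $\sigma$ contains all $\tfrac{q^{i+1}-1}{q-1}$ points of $\mu=\sigma\cap\Sigma$ (which all lie outside $\pi\cup\pi'$) and $\tfrac{q^{k+1}-q^{i+1}}{q-1}$ points outside $\Sigma$. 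Since $|\Sigma\setminus(\pi\cup\pi')|=\tfrac{(q^{k+1}-1)^2}{q-1}$ and $|\PG(n,q)\setminus\Sigma|=\tfrac{q^{n+1}-q^{2k+2}}{q-1}$, transitivity shows that the number through a fixed $P\in\Sigma\setminus(\pi\cup\pi')$ equals $\tfrac{W_i(q^{i+1}-1)}{(q^{k+1}-1)^2}$ (only $i\geq 0$ contributes), and the number through a fixed $P'\notin\Sigma$ equals $\tfrac{W_i(q^{k+1}-q^{i+1})}{q^{n+1}-q^{2k+2}}$ (only $i\leq k-1$ contributes, since $i=k$ forces $\sigma\subseteq\Sigma$); summing over the relevant $i$ gives $W_\Sigma(q,n,k)$ and $W_{\bar{\Sigma}}(q,n,k)$.

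I expect the main difficulty to be purely bookkeeping: checking that the product of the two combinatorial factors, once all powers of $q$ have been extracted from the $q$-binomials and from the injective-map count, reproduces the stated closed form of $W_i(q,n,k)$ — in particular the exponent identity above, and the correct handling of the degenerate index $i=-1$ in each of the three sums. The geometric ingredients (the reduction to $\mu=\sigma\cap\Sigma$, the graph-of-a-map count, the quotient argument, and the two transitivity statements) are routine.
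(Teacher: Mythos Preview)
Your proposal is correct and follows essentially the same line as the paper: classify the $k$-spaces $\sigma$ by $i=\dim(\sigma\cap\Sigma)$, count the admissible $i$-spaces inside $\Sigma$, extend via the quotient by $\mu$ and Lemma~\ref{lemmadisjunct}, and obtain $W_\Sigma$, $W_{\bar\Sigma}$ by double counting incident point--$\sigma$ pairs. The only (inessential) difference is that you count the $i$-spaces of $\Sigma$ disjoint from $\pi$ and $\pi'$ via the graph-of-an-injective-map description, whereas the paper counts ordered bases of such $i$-spaces; both yield $q^{\binom{i+1}{2}}\qbin{k+1}{i+1}\prod_{j=0}^{i}(q^{k-j+1}-1)$.
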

\begin{proof}
 To count the number of $k$-spaces $\pi''$, that are disjoint from $\pi$ and $\pi'$, we first count the number of possible intersections $\pi'' \cap \Sigma$.

We count the number of $i$-spaces in $\Sigma$, disjoint from $\pi$ and $\pi'$, by counting $((P_0,P_1,\dots, P_i),\sigma_i)$ in two ways. Here $\sigma_i$ is an $i$-space in $\Sigma$, disjoint from $\pi$ and $\pi'$, and the points $P_0,P_1, \dots, P_i$ form a basis of $\sigma_i$. For the ordered basis $(P_0,P_1, \dots, P_i)$ we have $\prod_{j=0}^{i} \frac{q^{2j}(q^{k-j+1}-1)^2}{q-1}$ possibilities since there are $\qbin{2k+2}{1}-2\qbin{k+j+1}{1}+\qbin{2j}{1}=\frac{q^{2j}(q^{k-j+1}-1)^2}{q-1}$ possibilities for $P_j$ if $P_0,P_1,\dots,P_{j-1}$ are given. 

By a similar argument, we find that the number of ordered bases $(P_0,P_1, \dots, P_i)$ for a given $\sigma_i$ is $\prod_{j=0}^{i} \frac{q^{j}(q^{i-j+1}-1)}{q-1}$.

In this way we find that the number of $i$-spaces in $\Sigma$, disjoint from $\pi$ and $\pi'$, is given by:
\begin{align*}
\frac{\prod_{j=0}^{i} \frac{q^{2j}(q^{k-j+1}-1)^2}{q-1}}{\prod_{j=0}^{i} \frac{q^{j}(q^{i-j+1}-1)}{q-1}}
=\prod_{j=0}^{i}\frac{ q^{j}(q^{k-j+1}-1)^2}{q^{i-j+1}-1}= q^{\binom{i+1}{2}}\qbin{k+1}{i+1}\prod_{j=0}^i(q^{k-j+1}-1).
\end{align*}
Now we count, for a given $i$-space $\sigma_i$ in $\Sigma$, the number of $k$-spaces $\pi''$ through $\sigma_i$ such that $\pi'' \cap \Sigma = \sigma_i$. This equals the number of $(k-i-1)$-spaces in $\PG(n-i-1,q)$, disjoint from a $(2k-i)$-space. This number is $q^{(k-i)(2k-i+1)}\qbin{n-2k-1}{k-i}$ by Lemma \ref{lemmadisjunct}. By this lemma we also see that the number of $k$-spaces disjoint from $\Sigma$ is given by $q^{(k+1)(2k+2)}\qbin{n-2k-1}{k+1}$. 
This implies that $W_i(q,n,k), -1 \leq i\leq k$, is the  number of $k$-spaces disjoint from $\pi$ and $\pi'$, and intersecting $\Sigma$ in an $i$-space. 

Now we have enough information to count the number of $k$-spaces disjoint from $\pi$ and $\pi'$:
\begin{align*}
W(q,n,k)&=\sum_{i=-1}^k  W_i(q,n,k)\;.
\end{align*}
We use the same arguments to calculate $W_\Sigma(q,n,k)$ and $W_{\bar{\Sigma}}(q,n,k)$. By double counting $(P, \pi'')$, with $\pi''$ a $k$-space through $P\in \Sigma$ disjoint from $\pi$ and $\pi'$, and double counting  $(P', \pi'')$, with $\pi''$ a $k$-space through $P'\notin \Sigma$ disjoint from $\pi$ and $\pi'$,  we find:
\begin{align*}
\left( \qbin{2k+2}{1} -2\qbin{k+1}{1}\right) \cdot W_\Sigma(q,n,k) &= \sum_{i=0}^k W_i(q,n,k) \cdot \qbin{i+1}{1}  \text{ and}  \\
\left( \qbin{n+1}{1} -\qbin{2k+2}{1}\right) \cdot W_{\bar{\Sigma}}(q,n,k) &= \sum_{i=-1}^{k-1} W_i(q,n,k) \cdot \left(\qbin{k+1}{1}-\qbin{i+1}{1}  \right)\;.
\end{align*}
This implies:
\begin{align*}
W_\Sigma(q,n,k) =& \frac{1}{(q^{k+1}-1)^2} \sum_{i=0}^k W_i(q,n,k) (q^{i+1}-1)
\\
W_{\bar{\Sigma}}(q,n,k)  =& \frac{1}{q^{n+1}-q^{2k+2}}  \sum_{i=-1}^{k-1} W_i(q,n,k)(q^{k+1}-q^{i+1})\;. \qedhere
\end{align*} 
\end{proof}

From now on we denote $W_i(q,n,k), W_\Sigma (q,n,k)$ and $W_{\bar{\Sigma}}(q,n,k)$ by $W_i, W_\Sigma$ and $W_{\bar{\Sigma}}$ if the dimensions $n$, $k$ and the field size $q$ are clear from the context. 
\begin{lemma} \label{lemmas1s2d1d2}
Let $\mathcal{L}$ be a Cameron-Liebler set of $k$-spaces in $\PG(n,q)$ with parameter $x$. 
\begin{enumerate}
\item For every $\pi \in \mathcal{L}$, there are $s_1$ elements of $\mathcal{L}$ meeting $\pi$. 
\item For skew $\pi, \pi'\in \mathcal{L}$ and a $k$-spread $\mathcal{S}_0$ in $\Sigma = \langle \pi,\pi' \rangle$, there exist exactly $d_2$ subspaces in $\mathcal{L}$ that are skew to both $\pi$ and $\pi'$ and there exist $s_2$ subspaces in $\mathcal{L}$ that meet both $\pi$ and $\pi'$.

Here, $d_2$, $s_1$ and $s_2$ are given by:
\begin{align*}
d_2(q,n,k,x,\mathcal{S}_0) &= (W_\Sigma-W_{\bar{\Sigma}})|\mathcal{S}_0 \cap \mathcal{L}|-2W_\Sigma+x W_{\bar{\Sigma}}\\
s_1(q,n,k,x) &= x\qbin{n}{k}-(x-1)\qbin{n-k-1}{k}q^{k^2+k}\\
s_2(q,n,k,x,\mathcal{S}_0) &= x\qbin{n}{k}-2(x-1)\qbin{n-k-1}{k}q^{k^2+k} +d_2(q,n,k,x,\mathcal{S}_0)\;,\\
\end{align*}
where $W_\Sigma$ and $W_{\bar{\Sigma}}$ are given by Lemma \ref{driedisjunct}.
\item Define $d'_2(q,n,k,x) = (x-2)W_\Sigma$ and $s'_2(q,n,k,x) = x\qbin nk -2(x-1)\qbin{n-k-1}{k} q^{k^2+k} +d'_2(q,n,k,x)$. If $n>3k+1$, then $|\mathcal{S}_0\cap \mathcal{L}|\leq x$ for every $k$-spread $\mathcal{S}_0$ in $\Sigma$. Moreover we have that $d_2(q,n,k,x,\mathcal{S}_0) \leq d'_2(q,n,k,x)$ and $s_2(q,n,k,x,\mathcal{S}_0) \leq s'_2(q,n,k,x)$.
\end{enumerate}
\end{lemma}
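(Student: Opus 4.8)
The plan is to establish the three enumerated claims in order, deriving each from the counting lemmas already proved, with Theorem~\ref{theodef} (especially definitions 3 and 6) and Lemma~\ref{driedisjunct} doing the heavy lifting.

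For item 1, I would count pairs $(\pi,\pi')$ with $\pi\in\mathcal{L}$ fixed and $\pi'\in\mathcal{L}$ arbitrary, splitting $\pi'$ according to whether it is skew to $\pi$ or meets $\pi$. By definition 3 of Theorem~\ref{theodef}, the number of $\pi'\in\mathcal{L}$ disjoint from $\pi$ is $(x-\chi(\pi))\qbin{n-k-1}{k}q^{k^2+k}=(x-1)\qbin{n-k-1}{k}q^{k^2+k}$ since $\pi\in\mathcal{L}$. Since $|\mathcal{L}|=x\qbin nk$ and $\pi$ itself lies in $\mathcal{L}$, subtracting the skew ones (and being careful that $\pi$ meets itself, so it is counted among the "meeting" $k$-spaces) gives $s_1 = x\qbin nk - (x-1)\qbin{n-k-1}{k}q^{k^2+k}$, exactly as stated.

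For item 2, take skew $\pi,\pi'\in\mathcal{L}$, set $\Sigma=\langle\pi,\pi'\rangle$, and let $\mathcal{Z}_\pi,\mathcal{Z}_{\pi'}$ be the sets of $k$-spaces skew to $\pi$, resp. to $\pi'$. Inclusion–exclusion gives $|\mathcal{L}\cap\mathcal{Z}_\pi\cap\mathcal{Z}_{\pi'}| = |\mathcal{L}\cap\mathcal{Z}_\pi| + |\mathcal{L}\cap\mathcal{Z}_{\pi'}| - |\mathcal{L}\cap(\mathcal{Z}_\pi\cup\mathcal{Z}_{\pi'})|$, and the first two terms are $(x-1)\qbin{n-k-1}{k}q^{k^2+k}$ each by definition~3. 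For the union I would count the $k$-spaces in $\mathcal{L}$ skew to $\pi$ but meeting $\pi'$ (and symmetrically) by a double count over triples involving the point of intersection with $\pi'$, or more efficiently by a weighting argument: a $k$-space in $\mathcal{L}$ skew to both $\pi$ and $\pi'$ contributes via the structural weights $W_\Sigma$, $W_{\bar\Sigma}$ of Lemma~\ref{driedisjunct} depending on where it meets $\Sigma$. Concretely, count triples $(\pi'',\text{pt},\mathcal{S}_0)$ or double-count pairs $(\pi'', Q)$ with $Q$ a point of $\Sigma\setminus(\pi\cup\pi')$ resp. $Q\notin\Sigma$, using that $[\text{pt}]_k\cap\mathcal{L}$ is a point-pencil (parameter $1$, Example~\ref{voorbeeldCL}) and definition~3 applied inside the pencil. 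Using the spread $\mathcal{S}_0$ in $\Sigma$ to organize the points of $\Sigma$, one obtains $d_2 = (W_\Sigma-W_{\bar\Sigma})|\mathcal{S}_0\cap\mathcal{L}| - 2W_\Sigma + xW_{\bar\Sigma}$, and then $s_2$ follows from $d_2$ by the same inclusion–exclusion (now counting $k$-spaces meeting both $\pi$ and $\pi'$). The main obstacle is bookkeeping: correctly accounting for the $k$-spaces that meet $\Sigma$ in a subspace of each dimension $i\in\{-1,0,\dots,k\}$ and matching the coefficients to the closed forms for $W_\Sigma,W_{\bar\Sigma}$; I expect this to be where most of the care goes, and where a sign or a $q$-power could slip.

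For item 3, assume $n>3k+1$. Here I would bound $|\mathcal{S}_0\cap\mathcal{L}|$: a $k$-spread $\mathcal{S}_0$ of $\Sigma\cong\PG(2k+1,q)$ has $\qbin{2k+2}{k+1}$ members, and I want to compare $d_2(q,n,k,x,\mathcal{S}_0)$ with $d_2'=(x-2)W_\Sigma$. Since the coefficient $W_\Sigma-W_{\bar\Sigma}$ of $|\mathcal{S}_0\cap\mathcal{L}|$ is positive when $n>3k+1$ (one checks $W_\Sigma>W_{\bar\Sigma}$ from the explicit formulas under this dimension hypothesis — the genuine inequality to verify), the bound $d_2\le d_2'$ is equivalent to $(W_\Sigma-W_{\bar\Sigma})|\mathcal{S}_0\cap\mathcal{L}| \le (x-2)W_\Sigma + 2W_\Sigma - xW_{\bar\Sigma} = xW_\Sigma - xW_{\bar\Sigma} = x(W_\Sigma-W_{\bar\Sigma})$, i.e. $|\mathcal{S}_0\cap\mathcal{L}|\le x$. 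But $|\mathcal{S}_0\cap\mathcal{L}|\le x$ need not hold for a spread of the subspace $\Sigma$ rather than of $\PG(n,q)$; instead I would invoke that the $k$-spaces of $\mathcal{L}$ inside $\Sigma$ through a fixed point, together with the point-pencil structure, force $|\mathcal{S}_0\cap\mathcal{L}|$ to be bounded — more precisely, I would use Lemma~\ref{lemmas1s2d1d2}(1)–(2) reflexively or a direct averaging over the $\qbin{2k+2}{1}$ points of $\Sigma$: each point lies on $|\mathcal{S}_0|^{-1}\cdot$(something) spread members, and summing $|[Q]_k\cap\mathcal{L}|$ over $Q\in\Sigma$ relates to $|\mathcal{S}_0\cap\mathcal{L}|$. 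The cleanest route is: the value $|\mathcal{S}_0\cap\mathcal{L}\cap[Q]_k|\in\{0,1\}$ for each point $Q$, so $|\mathcal{S}_0\cap\mathcal{L}| = \sum_{Q\in\Sigma}\frac{1}{\qbin{k+1}{1}}|\,\{\pi\in\mathcal{S}_0\cap\mathcal{L}: Q\in\pi\}|$, and bounding the inner count by the pencil parameter. Once $|\mathcal{S}_0\cap\mathcal{L}|\le x$ is in hand, $d_2\le d_2'$ is immediate, and $s_2\le s_2'$ follows directly since $s_2-s_2' = d_2-d_2'$ by definition of $s_2,s_2'$. The hard part in this item is pinning down $W_\Sigma > W_{\bar\Sigma}$ under $n>3k+1$ from the explicit sums — a finite but delicate comparison of the leading $q$-powers in the two expressions in Lemma~\ref{driedisjunct}.
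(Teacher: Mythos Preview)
Your treatment of item~1 is correct and matches the paper.

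For item~2, your double-counting sketch is too vague to close as written. You cannot compute, for a fixed point $Q$, how many elements of $\mathcal{L}$ through $Q$ are skew to both $\pi$ and $\pi'$; that number depends on $Q$ and is not determined by the Cameron--Liebler property. The paper's route is cleaner and uses definition~2 of Theorem~\ref{theodef} directly: with $\mathcal{Z}$ the set of $k$-spaces skew to both $\pi$ and $\pi'$, one has
\[
A\chi_{\mathcal{Z}}=W_\Sigma(v_\Sigma-v_\pi-v_{\pi'})+W_{\bar\Sigma}(j-v_\Sigma),
\]
and since $v_\Sigma=A\chi_{\mathcal{S}_0}$, $v_\pi=A\chi_\pi$, $v_{\pi'}=A\chi_{\pi'}$ and $j=\qbin nk^{-1}Aj$, the vector $\chi_{\mathcal{Z}}-W_\Sigma(\chi_{\mathcal{S}_0}-\chi_\pi-\chi_{\pi'})-W_{\bar\Sigma}\bigl(\qbin nk^{-1}j-\chi_{\mathcal{S}_0}\bigr)$ lies in $\ker(A)$. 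Dotting with $\chi$ then gives the stated $d_2$ in one line, and $s_2$ by inclusion--exclusion. Your phrase ``use the spread $\mathcal{S}_0$ to organize the points of $\Sigma$'' is exactly the step $v_\Sigma=A\chi_{\mathcal{S}_0}$, but the argument runs through the kernel of $A$, not through pencil-counting.

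For item~3 there is a genuine gap. You correctly reduce $d_2\le d_2'$ (granted $W_\Sigma\ge W_{\bar\Sigma}$) to the inequality $|\mathcal{S}_0\cap\mathcal{L}|\le x$, but none of your proposed methods establishes this. The averaging you describe only yields $|\mathcal{S}_0\cap\mathcal{L}|\le q^{k+1}+1$, since each point of $\Sigma$ lies in exactly one member of $\mathcal{S}_0$; invoking parts (1)--(2) ``reflexively'' is circular; and nothing you wrote uses the hypothesis $n>3k+1$, which is essential. The paper's argument is this: by definition~1 of Theorem~\ref{theodef}, write $\chi=\sum_{P}x_P r_P^T$, so $\sum_{P\in\tau}x_P=\chi(\tau)\in\{0,1\}$ for every $k$-space $\tau$ and $\sum_{P}x_P=x$. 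Then $|\mathcal{S}_0\cap\mathcal{L}|=\sum_{\sigma\in\mathcal{S}_0}\chi(\sigma)=\sum_{P\in\Sigma}x_P$. If this exceeded $x$, then $\sum_{P\notin\Sigma}x_P<0$; since $n>3k+1$ there exist $k$-spaces $\tau$ disjoint from $\Sigma$, and as each point outside $\Sigma$ lies in the same number of them, some such $\tau$ satisfies $\sum_{P\in\tau}x_P<0$, contradicting $\chi(\tau)\ge 0$. This is the missing idea; it cannot be recovered from pencil counts inside $\Sigma$. Your remark that the sign of $W_\Sigma-W_{\bar\Sigma}$ deserves justification is fair --- the paper takes it for granted --- but that is not where your proposal breaks down.
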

\begin{proof}
\begin{enumerate}
\item This follows directly from Theorem \ref{theodef}$(3)$ and $|\mathcal{L}|=x\qbin nk$. 
\item Let $\chi_\pi$ and $\chi_{\pi'}$ be the characteristic vectors of $\{\pi\}$ and $\{\pi'\}$, respectively, and let $\mathcal{Z}$ be the set of all $k$-spaces in $\PG(n,q)$ disjoint from $\pi$ and $\pi'$, and let $\chi_\mathcal{Z}$ be its characteristic vector. Furthermore, let $v_\pi$ and $v_{\pi'}$ be the incidence vectors of $\pi$ and $\pi'$, respectively, with their positions corresponding to the points of $\PG(n,q)$. Note that $A\chi_\pi = v_\pi$ and $A\chi_{\pi'} = v_{\pi'}$. By Lemma \ref{driedisjunct} we know the numbers $W_\Sigma$ and $W_{\bar{\Sigma}}$ of $k$-spaces disjoint from $\pi$ and $\pi'$, through a point $P$, if $P\in \Sigma$ and $P \notin \Sigma$ respectively. Let $\mathcal{S}_0$ be a $k$-spread in $\Sigma$ and let $v_\Sigma$ be the incidence vector of $\Sigma$ (as a point set). We find: 
\begin{align*}
A\chi_\mathcal{Z} &=W_\Sigma(v_\Sigma-v_\pi-v_{\pi'})+W_{\bar{\Sigma}} (\bm{j}-v_{\Sigma} ) \\
&=W_\Sigma(A\chi_{\mathcal{S}_0}^{}-A\chi_\pi-A\chi_{\pi'})+W_{\bar{\Sigma}} \left(\qbin{n}{k}^{-1}A\bm{j}-A \chi_{\mathcal{S}_0}\right)\\
\Leftrightarrow\qquad&\chi_\mathcal{Z}-W_\Sigma(\chi_{\mathcal{S}_0}-\chi_\pi-\chi_{\pi'})-W_{\bar{\Sigma}} \left(\qbin{n}{k}^{-1}\bm{j}- \chi_{\mathcal{S}_0}\right) \in \ker(A).
\end{align*}
We know that the characteristic vector $\chi$ of $\mathcal{L}$ is included in $\ker(A)^\perp$. This implies:
\begin{align*}
&&\chi_\mathcal{Z} \cdot \chi &=W_\Sigma(\chi_{\mathcal{S}_0}\cdot\chi-\chi(\pi)-\chi(\pi'))+W_{\bar{\Sigma}} (x- \chi_{\mathcal{S}_0}\cdot\chi) \\
&\Leftrightarrow & |\mathcal{Z}\cap \mathcal{L}|  &=W_\Sigma(|\mathcal{S}_0\cap \mathcal{L}|-2)+W_{\bar{\Sigma}} (x- |\mathcal{S}_0\cap \mathcal{L}|)  \\
&\Leftrightarrow & |\mathcal{Z}\cap \mathcal{L}| &=(W_\Sigma-W_{\bar{\Sigma}})|\mathcal{S}_0\cap \mathcal{L}|-2W_\Sigma+x W_{\bar{\Sigma}}\;, 
\end{align*}
which gives the formula for $d_2(q,n,k,x)$.  The formula for $s_2(q,n,k,x)$ follows from the inclusion-exclusion principle.  
\item Suppose $\Sigma$ is a $(2k+1)$-space in $\PG(n,q)$, and suppose $\mathcal{S}_0$ is a $k$-spread in $\Sigma$ such that $|\mathcal{S}_0 \cap \mathcal{L}|> x$. By property $1$ in Theorem \ref{theodef} we know that the characteristic vector $\chi$ of $\mathcal{L}$ can be written as $\sum_{P \in \PG(n,q)} x_P r_P^T$ for some $x_{P}\in\R$ where $r_{P}$ is the row of $A$ corresponding to the point $P$. Let $\chi_{\pi}$ be the characteristic vector of the set $\{\pi\}$ with $\pi$ a $k$-space, then $\chi_{\pi} \cdot \chi=\sum_{P \in \pi} x_P$ equals $1$ if $\pi \in \mathcal{L}$ and $0$ if $\pi \notin \mathcal{L}$. As $\chi \cdot \bm{j} = |\mathcal{L}| = x\qbin nk$ we find that $\sum_{P \in \PG(n,q)} x_P = x$.
\\
If $|\mathcal{S}_0 \cap \mathcal{L}|> x$, then $\chi \cdot \chi_{S_0} = \sum_{P\in \Sigma}x_P >x$. From these observations, it follows that $\sum_{P \in \PG(n,q) \setminus \Sigma} x_P = \sum_{P \in \PG(n,q) } x_P - \sum_{P \in  \Sigma} x_P$  is negative. As $n>3k+1$, there exists a $k$-space $\tau$ in $\PG(n,q)$, disjoint from $\Sigma$, with $\chi_{\tau} \cdot \chi = \sum_{P\in \tau} x_P$ negative, which gives the contradiction. 

It follows that $|\mathcal{S}_0 \cap \mathcal{L}|\leq x$. Since this is true for every $k$-spread $\mathcal{S}_0$ in every $(2k+1)$-space in $\PG(n,q)$, the statement holds.
\qedhere
\end{enumerate}
\end{proof}
Remark that we will use the upper bound $d'_2(q,n,k,x)$ and $s'_2(q,n,k,x)$ instead of $d_2(q,n,k,x,\mathcal{S}_0)$ and $s_2(q,n,k,x,\mathcal{S}_0)$ respectively, since they are independent of the chosen $k$-spread $\mathcal{S}_0$.

The following lemma is a generalization of Lemma $2.4$ in \cite{Klaus}.
\begin{lemma}\label{lemmaklaus}
Let $c,n,k$ be nonnegative integers with $n>3k+1$ and 
\begin{align*}
(c+1)s_1-\binom{c+1}{2}s'_2 > x\qbin{n}{k}\;,
\end{align*}
then no Cameron-Liebler set of $k$-spaces in $\PG(n,q)$ with parameter $x$ contains $c+1$ mutually skew $k$-spaces. 
\end{lemma}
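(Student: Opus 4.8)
The plan is to argue by double counting incidences between elements of $\mathcal{L}$ and the $(c+1)$ mutually skew $k$-spaces, exploiting the numbers $s_1$ and $s_2$ (bounded by $s'_2$) from Lemma \ref{lemmas1s2d1d2}. Suppose for contradiction that $\mathcal{L}$ is a Cameron-Liebler set of $k$-spaces in $\PG(n,q)$ with parameter $x$ containing $c+1$ mutually skew subspaces $\pi_0,\pi_1,\dots,\pi_c\in\mathcal{L}$. I would estimate from below the size of the union $\bigcup_{t=0}^{c}\mathcal{M}_t$, where $\mathcal{M}_t$ is the set of elements of $\mathcal{L}$ meeting $\pi_t$ (including $\pi_t$ itself). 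By Lemma \ref{lemmas1s2d1d2}(1) each $|\mathcal{M}_t|=s_1$, and for $t\neq t'$ any element of $\mathcal{L}$ meeting both $\pi_t$ and $\pi_{t'}$ is counted in $|\mathcal{M}_t\cap\mathcal{M}_{t'}|$, which by Lemma \ref{lemmas1s2d1d2}(2) equals $s_2(q,n,k,x,\mathcal{S}_0)\le s'_2(q,n,k,x)$ (here $n>2k+1$; if also $n>3k+1$ the clean bound $s'_2$ applies — one should check the hypothesis of the lemma matches, but $s'_2$ is defined for general $n>2k+1$ through $d'_2=(x-2)W_\Sigma$, so the Bonferroni step only needs $|\mathcal{M}_t\cap\mathcal{M}_{t'}|\le s'_2$). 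By the Bonferroni inequality,
\begin{align*}
\left|\bigcup_{t=0}^{c}\mathcal{M}_t\right|\ \ge\ \sum_{t=0}^{c}|\mathcal{M}_t|-\sum_{0\le t<t'\le c}|\mathcal{M}_t\cap\mathcal{M}_{t'}|\ \ge\ (c+1)s_1-\binom{c+1}{2}s'_2\;.
\end{align*}
On the other hand $\bigcup_{t=0}^c\mathcal{M}_t\subseteq\mathcal{L}$, so $\left|\bigcup_{t=0}^c\mathcal{M}_t\right|\le|\mathcal{L}|=x\qbin{n}{k}$. Combining the two bounds gives $(c+1)s_1-\binom{c+1}{2}s'_2\le x\qbin{n}{k}$, contradicting the hypothesis. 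Hence no such configuration of $c+1$ mutually skew subspaces exists.

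The main subtlety — and the step I would be most careful about — is the validity of the inequality $|\mathcal{M}_t\cap\mathcal{M}_{t'}|\le s'_2$ for the relevant range of $n$. Lemma \ref{lemmas1s2d1d2}(2) produces the \emph{exact} count $s_2(q,n,k,x,\mathcal{S}_0)$, which depends on $|\mathcal{S}_0\cap\mathcal{L}|$ for a spread $\mathcal{S}_0$ of $\Sigma=\langle\pi_t,\pi_{t'}\rangle$; part (3) of that lemma then bounds it by $s'_2$ but only asserts this under $n>3k+1$. Since the present lemma only assumes $n>2k+1$, I would either (i) restrict to $n>3k+1$ and invoke part (3) directly, or (ii) note that for $n=2k+1$ no two skew $k$-spaces span less than the whole space, handle that degenerate case separately, and observe the statement is really of interest for $n>3k+1$ where the later classification uses it. The cleanest route for the paper is to simply invoke Lemma \ref{lemmas1s2d1d2} with its hypotheses, so that $s_1$ and $s'_2$ are exactly the quantities appearing in the statement, and the whole proof reduces to the one-line inclusion–exclusion estimate above together with $\bigcup\mathcal{M}_t\subseteq\mathcal{L}$.

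Everything else is routine: the elements $\pi_0,\dots,\pi_c$ themselves are pairwise distinct and each lies in its own $\mathcal{M}_t$ (an element meets itself in a $k$-space), so no boundary terms are lost, and the inequality is strict in the hypothesis precisely so that the derived $\le$ yields a genuine contradiction. I expect the write-up to be only a few lines once Lemma \ref{lemmas1s2d1d2} is cited.
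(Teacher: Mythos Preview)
Your proof is correct and is essentially the paper's argument: the paper phrases the count sequentially (for each $i$, at least $s_1-i\cdot s_2$ elements of $\mathcal{L}$ meet $\pi_i$ and are skew to $\pi_0,\dots,\pi_{i-1}$), which sums to exactly your Bonferroni bound $(c+1)s_1-\binom{c+1}{2}s'_2\le|\mathcal{L}|$. Your caution about the hypothesis $n>3k+1$ needed for $s_2\le s'_2$ is well placed---the paper's proof invokes the same inequality without comment, and the lemma is in fact only applied later under $n\ge 3k+2$.
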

\begin{proof}
Assume that $\PG(n,q)$ has a Cameron-Liebler set $\mathcal{L}$ of $k$-spaces with parameter $x$ that contains $c+1$ mutually disjoint $k$-spaces $\pi_0,\pi_1,\dots,\pi_c$. Lemma \ref{lemmas1s2d1d2} shows that $\pi_i$ meets at least $s_1(q,n,k,x)-i s_2(q,n,k,x)$ elements of $\mathcal{L}$ that are skew to $\pi_0, \pi_1, \dots,\pi_{i-1}$. This implies that $x\qbin nk = |\mathcal{L}| \geq (c+1) s_1-\sum_{i=0}^c i s_2 \geq (c+1) s_1-\sum_{i=0}^c i s'_2$ which contradicts the assumption.
\end{proof}

\section{Classification results}\label{nieuwhfdst}
In this section, we will list some classification results for Cameron-Liebler sets of $k$-spaces in $\PG(n,q)$. First note that a Cameron-Liebler set of $k$-spaces with parameter $0$ is the empty set.\\
In the following lemma we start with the classification for the parameters $x  \in \ ]0,1[ \ \cup \ ]1,2[$.
\begin{lemma} \label{lemmatussen012}
There are no Cameron-Liebler sets of $k$-spaces in $\PG(n,q)$ with parameter $x \in \  ]0,1[ $ and if $n\geq3k+2$, then there are no Cameron-Liebler sets of $k$-spaces with parameter $x \in \ ]1,2[ $.
\end{lemma}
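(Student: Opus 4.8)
The plan is to use the integrality / counting machinery built up in Section \ref{sec3}, in particular Lemma \ref{lemmas1s2d1d2} and Lemma \ref{lemmaklaus}, together with a lower bound on the size of a partial $k$-spread inside a Cameron-Liebler set. The key structural fact is that a Cameron-Liebler set $\mathcal{L}$ with parameter $x$ always contains a fairly large collection of mutually skew $k$-spaces; combined with Lemma \ref{lemmaklaus}, which forbids too many mutually skew members once $x$ is small, this will force $x$ to be an integer (hence $x=0$ excludes $]0,1[$, and $x=1$ together with a classification of parameter-$1$ sets excludes $]1,2[$ when $n$ is large enough).

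First I would handle the range $x\in\ ]0,1[$. Fix a $k$-space $\pi\in\mathcal{L}$ (nonempty since $x>0$). By Theorem \ref{theodef}(3) the number of elements of $\mathcal{L}$ disjoint from $\pi$ is $(x-1)\qbin{n-k-1}{k}q^{k^2+k}$, which must be a nonnegative integer; if $x<1$ this number is negative, an immediate contradiction. Hence there are no Cameron-Liebler sets with parameter in $]0,1[$. (This is the easy half and does not even need $n\geq 3k+2$.)

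For $x\in\ ]1,2[$ and $n\geq 3k+2$ I would argue as follows. Again pick $\pi_0\in\mathcal{L}$; by Theorem \ref{theodef}(3) there are $(x-1)\qbin{n-k-1}{k}q^{k^2+k}>0$ members of $\mathcal{L}$ skew to $\pi_0$, so $\mathcal{L}$ contains at least two mutually skew $k$-spaces, say $\pi_0,\pi_1$. Iterating, and using Lemma \ref{lemmas1s2d1d2} (the bounds on $s_1$ and $s_2'$, valid since $n>3k+1$) to control how many members of $\mathcal{L}$ meet a growing skew family, one shows $\mathcal{L}$ contains a set of $c+1$ mutually skew $k$-spaces for a suitable $c$: concretely, the greedy argument of Lemma \ref{lemmaklaus} fails to terminate as long as $(c+1)s_1-\binom{c+1}{2}s_2' \le x\qbin nk$, so $\mathcal{L}$ must contain at least $c^*+1$ mutually skew spaces where $c^*$ is the largest $c$ with this inequality. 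The arithmetic, using $1<x<2$ and the explicit formulas $s_1 = x\qbin nk-(x-1)\qbin{n-k-1}{k}q^{k^2+k}$ and $s_2' = x\qbin nk-2(x-1)\qbin{n-k-1}{k}q^{k^2+k}+(x-2)W_\Sigma$, shows $c^*$ is large (growing with $q$ and $n$), while Lemma \ref{lemmaklaus} with $c = c^*$ gives the reverse conclusion that no such family exists — a contradiction — unless the relevant inequality degenerates, which happens precisely at integer $x$. The upshot is that for $n\geq 3k+2$ the parameter of a Cameron-Liebler set cannot lie strictly between $1$ and $2$.

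The main obstacle is the last step: making the counting in Lemma \ref{lemmaklaus} sharp enough to cover the whole open interval $]1,2[$ uniformly in $q$ and $n$, since $s_2'$ contains the somewhat unwieldy term $(x-2)W_\Sigma$ with $W_\Sigma$ given by the sum in Lemma \ref{driedisjunct}. One has to check that $s_2'$ stays small relative to $s_1$ — equivalently that $(x-1)\qbin{n-k-1}{k}q^{k^2+k}$ dominates $|(x-2)W_\Sigma|$ — precisely in the regime $n\geq 3k+2$; this is where the hypothesis on $n$ is used, and where the bulk of the (routine but delicate) estimation lies. Once that inequality is in hand, plugging $c=1$ (or the smallest $c$ that works) into Lemma \ref{lemmaklaus} already contradicts the existence of the two skew spaces $\pi_0,\pi_1$ produced above, completing the proof.
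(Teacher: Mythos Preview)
Your argument for $x\in\ ]0,1[$ is correct and identical to the paper's.

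For $x\in\ ]1,2[$ you eventually land on the right idea in your very last sentence (take $c=1$ in Lemma~\ref{lemmaklaus}), but the route there is confused and the ``main obstacle'' you describe is illusory. The paper's argument is a two-line observation: having found two skew members $\pi,\pi'\in\mathcal{L}$, Lemma~\ref{lemmas1s2d1d2}(2,3) says the number $d_2$ of elements of $\mathcal{L}$ skew to both satisfies $d_2\le d_2'=(x-2)W_\Sigma$, and since $x<2$ and $W_\Sigma>0$ this upper bound is negative --- an immediate contradiction. There is no estimation to do; the inequality you worry about (``$(x-1)\qbin{n-k-1}{k}q^{k^2+k}$ dominates $|(x-2)W_\Sigma|$'') never arises. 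The hypothesis $n\ge 3k+2$ enters only to invoke Lemma~\ref{lemmas1s2d1d2}(3), which is what makes $d_2'$ an upper bound for $d_2$.

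Your middle paragraph contains a genuine misreading of Lemma~\ref{lemmaklaus}. That lemma is a \emph{nonexistence} criterion: from the assumption that $c+1$ mutually skew members exist, it derives a lower bound on $|\mathcal{L}|$. Its contrapositive does not tell you that $\mathcal{L}$ \emph{must} contain $c^*+1$ skew spaces when the displayed inequality fails; there is no ``greedy argument that fails to terminate'' producing ever larger skew families. The talk of $c^*$ being large and the inequality ``degenerating at integer $x$'' is off-track. If you unwind your $c=1$ suggestion, the condition $2s_1-s_2'>x\qbin nk$ simplifies (using the formulas in Lemma~\ref{lemmas1s2d1d2}) exactly to $(x-2)W_\Sigma<0$, which is the paper's one-line argument in disguise.
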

\begin{proof}
Suppose there is a Cameron-Liebler set $\mathcal{L}$ of $k$-spaces with parameter $x\in \ ]0,1[$. Then $\mathcal{L}$ is not the empty set so suppose $\pi \in \mathcal{L}$. By property $3$ in Theorem $\ref{theodef}$ we find that the number of $k$-spaces in $\mathcal{L}$ disjoint from $\pi$ is negative, which gives the contradiction. 

Suppose there is a Cameron-Liebler set $\mathcal{L}$ of $k$-spaces with parameter $x \in \ ]1,2[$ in $\PG(n,q)$, $n\geq 3k+2$. By property $3$ in Theorem \ref{theodef}, we know that there are at least two disjoint $k$-spaces $\pi,\pi'\in \mathcal{L}$.  By Lemma \ref{lemmas1s2d1d2}$(2,3)$ we know that there are $d_2 \leq d_2'$ elements of $\mathcal{L}$ disjoint from $\pi$ and $\pi'$. Since $d_2'$ is negative for $x\in ]1,2[$, we find a contradiction.
\end{proof}


We continue with a classification result for Cameron-Liebler $k$-sets with parameter $x=1$, where we will use the following result, the so-called Erd\H{o}s-Ko-Rado theorem for projective spaces.

\begin{theorem}[{\cite{EKR1,EKR2} }]\label{EKR1}
If $\mathcal{L}$ is a set of pairwise non-trivially intersecting $k$-spaces in $\PG(n,q)$ with $n\geq2k+1$, then $|\mathcal{L}| \leq \qbin nk$, and equality holds if and only if $\mathcal{L}$ either consists of all $k$-spaces through a fixed point, or $n = 2k+1$ and $\mathcal{L}$ consists of all
$k$-spaces in a fixed hyperplane.
\end{theorem}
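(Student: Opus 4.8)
Here is the route I would take; all the spectral input needed is already assembled above, so the size bound is a direct application of Hoffman's ratio bound to the $q$-Kneser graph, and the classification comes out of the equality case of that bound together with Lemma~\ref{lemmaAgelijkaanV_0V_1}.

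\emph{The bound.} A pairwise intersecting family $\mathcal L$ is precisely an independent set in the disjointness graph $K=A_{k+1}$. This graph is regular of valency $d=q^{(k+1)^2}\qbin{n-k}{k+1}$ (Lemma~\ref{lemmadisjunct} with $m=j=k$), and this valency is the eigenvalue $P_{0,k+1}$ on $V_0=\langle j\rangle$. By Lemma~\ref{lemma2} together with the inequality $|P_{j,k+1}|\le|P_{1,k+1}|$ noted after it, the least eigenvalue of $K$ is $\tau:=P_{1,k+1}=-q^{k^2+k}\qbin{n-k-1}{k}<0$, attained only on $V_1$. Hoffman's ratio bound then gives $|\mathcal L|\le\qbin{n+1}{k+1}\cdot\frac{-\tau}{d-\tau}$. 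A rearrangement of the identity used in the step $3\Rightarrow4$ of Theorem~\ref{theodef}, namely $d=-\tau\cdot\frac{q^{n+1}-q^{k+1}}{q^{k+1}-1}$, turns this into $|\mathcal L|\le\qbin{n+1}{k+1}\cdot\frac{q^{k+1}-1}{q^{n+1}-1}=\qbin{n}{k}$.

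\emph{The extremal families.} If $|\mathcal L|=\qbin{n}{k}$ then equality holds in the ratio bound, which forces $\chi-\frac{|\mathcal L|}{\qbin{n+1}{k+1}}\,j$ to lie in the $\tau$-eigenspace $V_1$; hence $\chi\in V_0\perp V_1=\im(A^T)$ by Lemma~\ref{lemmaAgelijkaanV_0V_1}, so $\mathcal L$ is a Cameron--Liebler set of parameter $x=|\mathcal L|/\qbin{n}{k}=1$. Write $\chi=\sum_P x_P\,r_P$ over the points $P$, with $r_P$ the row of $A$ at $P$; evaluating on a $k$-space and on $j$ gives $\sum_{P\in\pi}x_P=\chi(\pi)\in\{0,1\}$ for every $k$-space $\pi$, and $\sum_P x_P=1$. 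Substituting this expression for $\chi$ into $|\{\pi\in\mathcal L:P\in\pi\}|=\sum_{\pi\ni P}\chi(\pi)$ and swapping the order of summation yields $|\{\pi\in\mathcal L:P\in\pi\}|=\qbin{n-1}{k-1}+x_P\,q^{k}\qbin{n-1}{k}$; since this count is at most $\qbin{n}{k}=\qbin{n-1}{k-1}+q^{k}\qbin{n-1}{k}$, every $x_P\le 1$, with $x_P=1$ precisely when $\mathcal L$ contains all $k$-spaces through $P$, in which case $\mathcal L=[P]_k$ by equality of cardinalities.

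\emph{The remaining case, and the main obstacle.} It remains to rule out the situation where $x_P<1$ for every point, i.e.\ no point lies on all members of $\mathcal L$. Here I would combine $\sum_P x_P=1$, the membership constraint $\sum_{P\in\pi}x_P\in\{0,1\}$, and the intersecting hypothesis: fixing $\pi_0\in\mathcal L$, analysing how the members of $\mathcal L$ meet $\pi_0$ (and how pairs of members meet inside the spans of pairs of members, cf.\ Lemma~\ref{driedisjunct}) should show that no such configuration exists once $n>2k+1$, whereas for $n=2k+1$ the $x_P$ must be constant on a hyperplane $H$ and constant off $H$, forcing $\mathcal L$ to be the set of all $k$-spaces contained in $H$. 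This dichotomy is the hard part of the theorem: it is the genuinely combinatorial stability statement that for $n>2k+1$ a maximum intersecting family must be a point-pencil, the hyperplane example surviving only at $n=2k+1$. The cleanest approaches I know are an induction on $n-2k$ (passing to the quotient by a point of $\pi_0$, or to a hyperplane section), or a Hilton--Milner-type estimate bounding the size of any intersecting family not contained in a single point-pencil strictly below $\qbin{n}{k}$ when $n>2k+1$.
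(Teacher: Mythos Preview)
The paper does not give its own proof of this theorem: it is quoted as a black box from \cite{EKR1,EKR2} and then applied (in Theorem~\ref{xgelijkaanee}). So there is nothing in the paper to compare against directly. Your route via Hoffman's ratio bound on the $q$-Kneser graph $K=A_{k+1}$ is exactly the approach of those cited references (Godsil--Newman), and your derivation of the bound $|\mathcal L|\le\qbin nk$, together with the consequence that equality forces $\chi\in V_0\perp V_1$ (hence $\mathcal L$ is Cameron--Liebler of parameter $1$) and the identity $|\{\pi\in\mathcal L:P\in\pi\}|=\qbin{n-1}{k-1}+x_P\,q^{k}\qbin{n-1}{k}$, is correct.

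The genuine gap is the one you flag yourself. Once no point has $x_P=1$, you have not actually shown that either $n=2k+1$ and $\mathcal L$ is the set of $k$-spaces in a hyperplane, or that no such configuration exists; you only sketch possible strategies. One of those strategies is in fact available inside the paper: the Hilton--Milner theorem for vector spaces, quoted later as Theorem~\ref{theomussche}, closes the characterisation for $q\ge 3$, $n\ge 2k+2$ (and $q=2$, $n\ge 2k+3$) immediately, since an intersecting family with $\bigcap_{F\in\mathcal L}F=\emptyset$ then has size at most $\qbin nk-q^{k^2+k}\qbin{n-k-1}{k}+q^{k+1}<\qbin nk$, forcing a common point and hence a point-pencil. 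What remains is precisely $n=2k+1$ (and the small case $q=2$, $n=2k+2$), where the hyperplane example lives; your ``$x_P$ constant on a hyperplane and constant off it'' is the right target, but the argument deriving it is not supplied. That is the substantive content handled in \cite{EKR2}. In short: the bound is complete, the characterisation is not.
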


\begin{theorem}\label{xgelijkaanee}
Let $\mathcal{L}$ be a Cameron-Liebler set of $k$-spaces with parameter $x=1$ in $\PG(n,q)$, $n\geq2k+1$. Then $\mathcal{L}$ is a point-pencil or $n=2k+1$ and $\mathcal{L}$ is the set of all $k$-spaces in a hyperplane of $\PG(2k+1,q)$.
\end{theorem}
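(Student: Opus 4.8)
The plan is to reduce the statement to the Erd\H{o}s--Ko--Rado theorem for projective spaces (Theorem \ref{EKR1}). Since $\mathcal{L}$ has parameter $x=1$, we have $|\mathcal{L}| = \qbin{n}{k}$, which is exactly the EKR upper bound. So it suffices to prove that $\mathcal{L}$ is an intersecting family, i.e.\ that no two of its members are disjoint, and then quote the equality case of Theorem \ref{EKR1}.

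The intersecting property is immediate from statement $3$ of Theorem \ref{theodef}: for any $\pi \in \mathcal{L}$ the number of elements of $\mathcal{L}$ disjoint from $\pi$ equals $(x-\chi(\pi))\qbin{n-k-1}{k}q^{k^2+k} = (1-1)\qbin{n-k-1}{k}q^{k^2+k} = 0$, because $\pi \in \mathcal{L}$ gives $\chi(\pi)=1$. Hence every element of $\mathcal{L}$ meets $\pi$, and since $\pi$ was an arbitrary member, $\mathcal{L}$ is a set of pairwise intersecting $k$-spaces. (Note $\mathcal{L}\neq\emptyset$ since $x\neq 0$, so this is not vacuous.)

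Now I would apply Theorem \ref{EKR1}: a family of pairwise intersecting $k$-spaces in $\PG(n,q)$ with $n\geq 2k+1$ attaining $|\mathcal{L}| = \qbin{n}{k}$ must be either the set of all $k$-spaces through a fixed point (a point-pencil) or, when $n=2k+1$, the set of all $k$-spaces in a fixed hyperplane — which is exactly the claimed dichotomy. For completeness one can recall from Example \ref{voorbeeldCL} that both these families really are Cameron-Liebler sets with parameter $1$, the hyperplane example precisely because $\frac{q^{n-k}-1}{q^{k+1}-1}=1$ exactly when $n=2k+1$, so the classification is sharp.

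There is essentially no obstacle in this argument: everything hinges on the single observation that the ``number of disjoint members'' formula in Theorem \ref{theodef}$(3)$ vanishes identically once $x=1$ and $\pi\in\mathcal{L}$, after which the heavy lifting is done entirely by the already-cited EKR theorem. The only point that needs a word of care is to check that the EKR \emph{equality} case is being invoked with the right hypotheses ($n\geq 2k+1$ and $|\mathcal{L}|$ equal to $\qbin{n}{k}$), both of which hold by assumption since $|\mathcal{L}| = x\qbin{n}{k} = \qbin{n}{k}$.
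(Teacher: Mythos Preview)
Your proof is correct and follows exactly the same approach as the paper: use Theorem \ref{theodef}(3) with $x=1$ to see that $\mathcal{L}$ is a pairwise intersecting family of size $\qbin{n}{k}$, then invoke the equality case of the projective Erd\H{o}s--Ko--Rado theorem (Theorem \ref{EKR1}). Your write-up is in fact more detailed than the paper's one-line proof, including the explicit check that $\mathcal{L}\neq\emptyset$ and the remark on Example \ref{voorbeeldCL}.
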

\begin{proof}
The theorem follows immediately from Lemma \ref{EKR1} since, by Theorem \ref{theodef}$(3)$, we know that $\mathcal{L}$ is a family of pairwise intersecting $k$-spaces of size $\qbin nk$. 
\end{proof}

We continue this section by showing that there are no Cameron-Liebler sets of $k$-spaces in $\PG(n,q)$, $n\geq3k+2$, with parameter $2\leq x\leq \frac{1}{\sqrt[8]{2}} q^{\frac{n}{2}-\frac{k^2}{4}-\frac{3k}{4}-\frac{3}{2}}(q-1)^{\frac{k^2}{4}-\frac{k}{4}+\frac{1}{2}} \sqrt{q^2+q+1}$. For this classification result, we will use the following theorem, the so called Hilton-Milner theorem for projective spaces.

\begin{theorem}[{\cite[Theorem 1.4]{Mussche} }]\label{theomussche}
Let $k\geq 1$ be an integer. If $q\geq3$ and $n\geq 2k+2$, or if $q=2$ and $n\geq 2k+3$, then any family $\mathcal{F}$ of pairwise non-trivially intersecting $k$-spaces of $\PG(n,q)$, with $\cap_{F \in \mathcal{F}} F = \emptyset$ has size at most $\qbin nk -q^{k^2+k}\qbin{n-k-1}{k} + q^{k+1}$.
\end{theorem}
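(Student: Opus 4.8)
The plan is to first pin down the extremal configuration, since it both guides the argument and explains the shape of the bound. Fix a point $P$ and a $k$-space $B$ with $P\notin B$, and set $\Sigma=\langle P,B\rangle$, a $(k+1)$-space. Consider the family consisting of all $k$-spaces through $P$ that meet $B$, together with all $k$-spaces contained in $\Sigma$ that avoid $P$ (these are exactly the hyperplanes of $\Sigma$ not through $P$, of which there are $\qbin{k+2}{1}-\qbin{k+1}{1}=q^{k+1}$, and $B$ is one of them). This family is pairwise intersecting with empty total intersection: two members through $P$ meet in $P$, two hyperplanes of $\Sigma$ meet in a $(k-1)$-space, and a member $T\ni P$ meeting $B$ has $\dim(T\cap\Sigma)\geq 1$, hence meets every hyperplane of $\Sigma$ inside $\Sigma$. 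By Lemma \ref{lemmaaantaldisjunct} the first part has $\qbin nk-q^{k^2+k}\qbin{n-k-1}{k}$ members and the second has $q^{k+1}$, so the family attains the claimed bound. The task is to show no nontrivial intersecting family can exceed it.

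For the upper bound I would run a maximum-degree dichotomy. For a point $R$ let $d(R)$ be the number of members through $R$, let $\Delta=\max_{R}d(R)$ be attained at $P^{*}$, and split $\mathcal{F}=\mathcal{F}^{+}\sqcup\mathcal{F}^{-}$ according to whether a member contains $P^{*}$. Since $\bigcap_{F}F=\emptyset$ we have $\mathcal{F}^{-}\neq\emptyset$; fix $F_{0}\in\mathcal{F}^{-}$, so $P^{*}\notin F_{0}$. Every member of $\mathcal{F}^{+}$ is a $k$-space through $P^{*}$ meeting $F_{0}$, and by Lemma \ref{lemmaaantaldisjunct} there are exactly $q^{k^2+k}\qbin{n-k-1}{k}$ such $k$-spaces disjoint from $F_{0}$; hence $\Delta=|\mathcal{F}^{+}|\leq \qbin nk-q^{k^2+k}\qbin{n-k-1}{k}=:M$, which is precisely the first summand of the target. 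The argument then branches on the size of $\Delta$ relative to $M$, since the bound on $|\mathcal{F}^{+}|$ alone only suffices once $|\mathcal{F}^{-}|$ is controlled.

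In the \emph{stable regime}, where $\Delta$ is within $q^{k+1}$ of $M$, I would aim to force $|\mathcal{F}^{-}|\leq q^{k+1}$. Here almost all $k$-spaces through $P^{*}$ that meet $F_{0}$ already lie in $\mathcal{F}^{+}$; each $G\in\mathcal{F}^{-}$ must meet every one of them, yet it is itself disjoint from $q^{k^2+k}\qbin{n-k-1}{k}$ of the $k$-spaces through $P^{*}$ (again Lemma \ref{lemmaaantaldisjunct}), all of which must therefore be absent from $\mathcal{F}^{+}$. Since the total deficit $\qbin nk-\Delta$ is small, this rigidity should pin each member of $\mathcal{F}^{-}$ down to a hyperplane of $\Sigma=\langle P^{*},F_{0}\rangle$ avoiding $P^{*}$, of which there are exactly $q^{k+1}$, recovering the extremal family. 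The Erd\H{o}s--Ko--Rado statement of Theorem \ref{EKR1} can be used as the base input guaranteeing $|\mathcal{F}|<\qbin nk$ and seeding the stability analysis.

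The genuinely hard part is the complementary \emph{spread-out regime}, where $\Delta$ is appreciably below $M$: then $|\mathcal{F}^{+}|$ is already well under the first summand, and one must rule out that the deficit is recovered by a large $\mathcal{F}^{-}$. This cannot be handled by fixing a single high-degree point, and I expect it to require a global estimate — a weighted double count of incident pairs $(\text{point},\,\text{member})$ balanced against the full degree sequence, or equivalently a Hoffman/ratio-type eigenvalue bound for the disjointness (Kneser) graph $K=A_{k+1}$ applied to the non-star part of $\mathcal{F}$, for which the eigenvalue $P_{1,k+1}$ from Lemma \ref{eigenvallem} is available. This is the delicate estimate carried out in \cite{Mussche}, and it is also the step where the field size matters: at $n=2k+2$ the count is too tight for $q=2$, which is exactly why the hypothesis is strengthened to $n\geq 2k+3$ in that case. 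I regard establishing the spread-out bound uniformly, together with the $q=2$ boundary analysis, as the main obstacle; the extremal description and the stable-regime argument above are comparatively routine once Lemma \ref{lemmaaantaldisjunct} is in hand.
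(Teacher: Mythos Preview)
The paper does not prove Theorem~\ref{theomussche}: it is quoted verbatim from \cite[Theorem~1.4]{Mussche} and used as a black box in the proof of Lemma~\ref{lemmaLcontainspoint-pencil}. There is therefore no ``paper's own proof'' to compare against.

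As for the content of your sketch: you have correctly identified the Hilton--Milner-type extremal family and the standard maximum-degree dichotomy, and your counting of the extremal example via Lemma~\ref{lemmadisjunct} is fine. However, what you have written is an outline rather than a proof. You explicitly defer the ``spread-out regime'' (and with it the entire non-trivial estimate) back to \cite{Mussche}, which is precisely where the work lies; the stable-regime paragraph is also only heuristic, since you do not actually carry out the argument that forces each $G\in\mathcal{F}^{-}$ into a hyperplane of $\Sigma$. So your proposal is not incorrect in spirit, but it is incomplete as a standalone proof and ultimately rests on the same external reference the paper already invokes.
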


To simplify the notations, we denote $q^{\frac{n}{2}-\frac{k^2}{4}-\frac{3k}{4}-\frac{3}{2}}(q-1)^{\frac{k^2}{4}-\frac{k}{4}+\frac{1}{2}} \sqrt{q^2+q+1}$ by $f(q,n,k)$.\\
Recall that the set of all {$k$-spaces} in a hyperplane in $\PG(n,q)$ is a Cameron-Liebler set of $k$-spaces with parameter $x=\frac{q^{n-k}-1}{q^{k+1}-1}$ (see Example \ref{voorbeeldCL2}) and note that $f(q,n,k) \in \mathcal{O}(\sqrt{q^{n-2k}})$ while $\frac{q^{n-k}-1}{q^{k+1}-1} \in \mathcal{O}(q^{n-2k-1})$.

We start with some lemmas.
\begin{lemma}\label{ongelijkheid} For $n\geq 2k+2$, we have
\begin{align*}
	&\qbin{n}{k}> \qbin{n-k-1}{k}q^{k^2+k}>W_\Sigma\;.
\end{align*}
If also $k\geq2$, then
\begin{align*}
	\qbin{n-k-1}{k}q^{k^2+k}>q^{nk-k^2} + q^{nk-k^2-1} +q^{nk-k^2-2}\;.
\end{align*}
\end{lemma}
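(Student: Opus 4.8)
The plan is to prove the three inequalities in turn, all by reducing to elementary estimates on Gaussian binomials. For the first chain, recall the crude but convenient bounds $q^{b(a-b)}\le \qbin{a}{b}\le \frac{q^{b(a-b)+b}}{q-1}\cdot\text{(small factor)}$; more usefully here I would use the exact identity $\qbin{a}{b}=\qbin{a-1}{b-1}\cdot\qb{a}{b}$ and the fact that $\qbin{n}{k}=\qbin{n-1}{k-1}\qb{n}{k}$ to peel off factors. For $\qbin{n}{k}>\qbin{n-k-1}{k}q^{k^2+k}$, I would write $\qbin{n}{k}$ in terms of $\qbin{n-k-1}{k}$ by the standard telescoping $\qbin{n}{k}=\qbin{n-k-1}{k}\cdot\prod_{j=0}^{k}\qb{n-j}{k-j+\,?}$ — more cleanly, use Lemma \ref{lemmadisjunct}: the number of $k$-spaces in $\PG(n,q)$ meeting a fixed $k$-space is $\qbin{n}{k}-q^{k^2+k}\qbin{n-k-1}{k}$, which is a count and hence positive (and strictly positive for $n\ge 2k+1$ since a point-pencil at a point of the fixed $k$-space already supplies many). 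That gives the left inequality for free.

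For the middle inequality $q^{k^2+k}\qbin{n-k-1}{k}>W_\Sigma$: here I would go back to the definition of $W_\Sigma$ in Lemma \ref{driedisjunct}, $W_\Sigma=\frac{1}{(q^{k+1}-1)^2}\sum_{i=0}^k W_i(q^{i+1}-1)$ with $W_i=q^{2k^2+k+\frac{3i^2}{2}-\frac{i}{2}-3ik}\qbin{n-2k-1}{k-i}\qbin{k+1}{i+1}\prod_{j=0}^i(q^{k-j+1}-1)$. The point is that $W_\Sigma$ counts $k$-spaces disjoint from \emph{two} skew $k$-spaces through a fixed point of their span, while $q^{k^2+k}\qbin{n-k-1}{k}$ counts $k$-spaces disjoint from a \emph{single} $k$-space through a fixed point not on it (Lemma \ref{lemmaaantaldisjunct}); imposing disjointness from a second $k$-space can only decrease the count, and strictly so for $n\ge 2k+2$ because there genuinely exist $k$-spaces through that point meeting $\pi'$ but not $\pi$. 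I would make this rigorous by exhibiting a point $P\in\Sigma\setminus(\pi\cup\pi')$ and noting that the set counted by $q^{k^2+k}\qbin{n-k-1}{k}$ (spaces through $P$ skew to $\pi$) strictly contains the set counted by $W_\Sigma$ (those also skew to $\pi'$), the containment being proper since $\pi'$ has points outside $\pi$ reachable from $P$.

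For the last inequality (assuming $k\ge 2$), $\qbin{n-k-1}{k}q^{k^2+k}>q^{nk-k^2}+q^{nk-k^2-1}+q^{nk-k^2-2}=q^{nk-k^2-2}(q^2+q+1)$, I would divide through by $q^{k^2+k}$ to reduce to $\qbin{n-k-1}{k}>q^{nk-2k^2-k-2}(q^2+q+1)$. Now use the lower bound $\qbin{n-k-1}{k}>q^{k(n-2k-1)}=q^{nk-2k^2-k}$ (valid since $\qbin{a}{b}>q^{b(a-b)}$ for $0<b<a$), so it suffices that $q^{nk-2k^2-k}\ge q^{nk-2k^2-k-2}(q^2+q+1)$, i.e. $q^2\ge q^2+q+1$ — which is false by a hair, so the crude bound is not quite enough and I must sharpen it. The fix: use $\qbin{n-k-1}{k}=q^{k(n-2k-1)}\prod_{j=1}^{k}\frac{q^{n-2k-1+j}-1}{q^{j}-1}\cdot(\text{correction})$; more simply, $\qbin{n-k-1}{k}\ge q^{k(n-2k-1)}\cdot\frac{q^{n-2k-1}-1}{q^{k}-1}\cdot\frac{1}{q^{n-3k-1}}$ is awkward, so instead I would use $\qbin{n-k-1}{k}>q^{k(n-2k-1)}\bigl(1+q^{-1}+q^{-2}+\cdots\bigr)$-type expansions: precisely, $\qbin{a}{1}=q^{a-1}+q^{a-2}+\cdots+1>q^{a-1}(1+q^{-1}+q^{-2})$ for $a\ge 3$, and $\qbin{n-k-1}{k}\ge q^{k(n-2k-1)}\cdot\frac{q^{n-k-1}-1}{q^{k}-1}/q^{n-2k-1}\ge q^{k(n-2k-1)}(1+q^{-1}+q^{-2}+\cdots)$ when $n-k-1\ge 3$, which holds since $n\ge 2k+2$ and $k\ge 2$ give $n-k-1\ge k+1\ge 3$. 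This yields $\qbin{n-k-1}{k}>q^{nk-2k^2-k}(1+q^{-1}+q^{-2})=q^{nk-2k^2-k-2}(q^2+q+1)$, which is exactly what is needed.

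The main obstacle is the third inequality, where the naive estimate $\qbin{a}{b}>q^{b(a-b)}$ fails by the additive $q+1$ term; one must extract the next-order terms of the Gaussian binomial (equivalently, use at least one extra factor of the form $\qbin{m}{1}=1+q+\cdots+q^{m-1}$ rather than just $q^{m-1}$). Everything else is either a combinatorial counting argument (for the first two inequalities) or routine manipulation of $q$-analogues; the condition $k\ge 2$ enters precisely to guarantee $n-k-1\ge 3$ so that the sharpened bound on the single factor $\qbin{\cdot}{1}$ carries all three terms $q^2$, $q$, and $1$.
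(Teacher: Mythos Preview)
Your treatment of the chain $\qbin{n}{k}>q^{k^2+k}\qbin{n-k-1}{k}>W_\Sigma$ is correct and is exactly the paper's argument: both inequalities express strict containments between the sets of $k$-spaces through a fixed point $P\in\Sigma\setminus(\pi\cup\pi')$ (all of them; those skew to $\pi$; those skew to both $\pi$ and $\pi'$).

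For the second displayed inequality your diagnosis and overall strategy are right, but the step you actually write down is false. You isolate the factor $\dfrac{q^{n-k-1}-1}{q^k-1}$ of $\qbin{n-k-1}{k}$ and claim that $\dfrac{q^{n-k-1}-1}{(q^k-1)\,q^{n-2k-1}}\ge 1+q^{-1}+q^{-2}$. For $k\ge 2$ this fails: take $k=2$, $n=7$, where the left side is $\dfrac{q^{4}-1}{(q^{2}-1)q^{2}}=1+q^{-2}<1+q^{-1}+q^{-2}$. The reason is that a factor with denominator $q^k-1$ is of size $q^{n-2k-1}\bigl(1+O(q^{-k})\bigr)$ and carries no $q^{-1}$-correction. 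Your own $\qbin{a}{1}$ remark is the right idea, but the only factor of $\qbin{n-k-1}{k}$ that is a genuine $\qbin{\cdot}{1}$ (denominator $q-1$) is $\dfrac{q^{n-2k}-1}{q-1}$, not the one you used; and the side-condition $n-k-1\ge 3$ you invoke is irrelevant to that factor.

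The paper instead peels off the two factors of $\qbin{n-k-1}{k}$ with the smallest denominators ($q-1$ and $q^2-1$), bounds the remaining $k-2$ factors each strictly by $q^{n-2k-1}$, and --- after swapping the two separated denominators --- uses
\[
\frac{q^{n-2k+1}-1}{q-1}=1+q+\cdots+q^{n-2k}\ \ge\ q^{n-2k}+q^{n-2k-1}+q^{n-2k-2},
\qquad
\frac{q^{n-2k}-1}{q^{2}-1}\ \ge\ q^{n-2k-2},
\]
both valid for $n\ge 2k+2$; multiplying everything out gives exactly $q^{nk-k^2}+q^{nk-k^2-1}+q^{nk-k^2-2}$.
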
 
\begin{proof}
The first inequality follows since $\qbin{n}{k}$ is the number of $k$-spaces through a fixed point in $\PG(n,q)$, $\qbin{n-k-1}{k}q^{k^2+k}$ is the number of $k$-spaces through a fixed point disjoint from a given $k$-space not through that point (see Lemma \ref{lemmadisjunct}), and $W_\Sigma$ is the number of $k$-spaces through a fixed point and disjoint from two given $k$-spaces not through that point.

The second inequality, for $k\geq 2, n\geq 2k+2$, follows from
\begin{align*}
\qbin{n-k-1}{k}q^{k^2+k}&=\left(\prod_{i=0}^{k-3} \left( \frac{q^{n-k-1-i}-1}{q^{k-i}-1}\right) \right)\left(\frac{q^{n-2k+1}-1}{q^{}-1}\frac{q^{n-2k}-1}{q^2-1} \right) q^{k^2+k}\\
&> q^{(n-2k-1)(k-2)} (q^{n-2k}+q^{n-2k-1}+q^{n-2k-2})q^{n-2k-2} q^{k^2+k}\\
&= q^{nk-k^2} + q^{nk-k^2-1} +q^{nk-k^2-2}\;.\qedhere
\end{align*}
\end{proof}

\begin{lemma}[{\cite[Lemma 2.1 and 2.2]{ferdinand} }]\label{ferdinand}
	Let $n>k>0$ be integers.
	\begin{itemize}
		\item If $q= 2$, then $\left(2-\frac{1}{q^{n-k}} \right)q^{k(n-k)} \leq \qbin{n}{k}\leq \frac{111}{32}q^{k(n-k)}$.
		\item If $q\geq 3$, then $\left(1+\frac{1}{q} \right)q^{k(n-k)} \leq \qbin{n}{k}\leq 2q^{k(n-k)}$.
	\end{itemize}
\end{lemma}

\begin{notation}\label{deltaenC}
	We denote $\Delta(q,n,k) = \qbin{n-k-1}{k}q^{k^2+k}$ and $C(q,n,k)=\qbin nk - \qbin{n-k-1}{k}q^{k^2+k}$ from now on. Then, according to Lemma \ref{lemmas1s2d1d2} we can write
	\begin{align*}
		s_1(q,n,k,x)&=xC(q,n,k)+\Delta(q,n,k)\quad\text{and}\\
		s_2'(q,n,k,x)&=xC(q,n,k)+(2-x)\Delta(q,n,k)+(x-2)W_\Sigma.
	\end{align*}
	We denote $\Delta(q,n,k)$ and $C(q,n,k)$ by $\Delta$ and $C$ if $q,n$ and $k$ are clear from the context.
\end{notation}

Before proving a lemma on $\Delta$ and $C$, we give a result on the Gaussian binomial coefficients. First, we recall the (double) $q$-analogue of Pascal's rule:
\begin{align}\label{qpascal}
	q^{b}\qbin{a-1}{b}_{q}+\qbin{a-1}{b-1}_{q}=\qbin{a}{b}_{q}=\qbin{a-1}{b}_{q}+q^{a-b}\qbin{a-1}{b-1}_{q}\;.
\end{align}

\begin{lemma}\label{somproductgbc}
	For integers $a,b,c$ with $0\leq b,c\leq a$ we have that
	\begin{align*}
		\qbin{a}{b}_{q}=\sum_{i=0}^{c}\qbin{a-c}{b-i}_{q}\qbin{c}{i}_{q}q^{(b-i)(c-i)\;.}
	\end{align*}
	Here, we consider $\qbin{x}{y}=0$ if $y<0$ or $y>x$.
\end{lemma}
\begin{proof}
	Induction on $c$. In the induction step we use the left equality in \eqref{qpascal}.
\end{proof}

\begin{lemma}\label{LemmaWsigma} If $n\geq 2k+1$ and $q\geq3$, then 
	\begin{align*}
		W_\Sigma \leq  \Delta-\frac{C}{2}.
	\end{align*}
\end{lemma}
\begin{proof}
	First, using the definition of $W_\Sigma$ as given in Lemma \ref{driedisjunct}, we find
	\begin{align*}
		W_\Sigma&=\frac{1}{(q^{k+1}-1)^2} \sum_{i=0}^{k}(q^{i+1}-1)q^{2k^2+k+ \frac{3i^2}{2}-\frac{i}{2}-3ik}\qbin{n-2k-1}{k-i}\qbin{k+1}{i+1}\prod_{j=0}^{i}(q^{k-j+1}-1)\\
		&=q^{2k^2+k}\sum_{i=0}^{k}q^{ \frac{3i^2}{2}-\frac{i}{2}-3ik}\qbin{n-2k-1}{k-i}\qbin{k}{i}\prod_{j=1}^{i}(q^{k-j+1}-1)\;.
	\end{align*}
	Here, the final product is considered 1 if $i=0$ (the `empty' product). Now, using the definitions of $\Delta$ and $C$ as in Notation \ref{deltaenC}, the inequality stated above can be written as:
	\begin{align}\label{ongelijkheid1}
		q^{2k^2+k}\sum_{i=0}^{k}q^{ \frac{3i^2}{2}-\frac{i}{2}-3ik}\qbin{n-2k-1}{k-i}\qbin{k}{i}\prod_{j=1}^{i}(q^{k-j+1}-1)\leq \frac{3}{2}\qbin{n-k-1}{k}q^{k^2+k}-\frac{1}{2}\qbin{n}{k}\;.
	\end{align}
	For $k=1$ it reduces to
	\begin{align*}
		q^{3}\qbin{n-3}{1}+q(q-1)\leq \frac{3}{2}\qbin{n-2}{1}q^{2}-\frac{1}{2}\qbin{n}{1}\qquad	\Leftrightarrow\qquad\frac{q-1}{2}\geq 0\;,
	\end{align*}
	which is true for all $q\geq2$. So, we will from now on assume that $k\geq2$.
	\par Repeatedly applying the left equality in \eqref{qpascal} we find that $\qbin{n}{k}=q^{k^{2}+k}\qbin{n-k-1}{k}+\sum_{i=0}^{k}q^{ik}\qbin{n-i-1}{k-1}$, so inequality \eqref{ongelijkheid1} can be rewritten as
	\begin{multline*}
		q^{2k^2+k}\sum_{i=0}^{k}q^{ \frac{3i^2}{2}-\frac{i}{2}-3ik}\qbin{n-2k-1}{k-i}\qbin{k}{i}\prod_{j=1}^{i}(q^{k-j+1}-1)+\frac{1}{2}\sum_{i=0}^{k}q^{ik}\qbin{n-i-1}{k-1}\\\leq \qbin{n-k-1}{k}q^{k^2+k}\;.
	\end{multline*}
	We now apply Lemma \ref{somproductgbc} on the right hand side of this inequality and we see that it is equivalent with
	\begin{align}\label{ongelijkheid2}
		&q^{2k^2+k}\sum_{i=0}^{k}q^{ \frac{3i^2}{2}-\frac{i}{2}-3ik}\qbin{n-2k-1}{k-i}\qbin{k}{i}\prod_{j=1}^{i}(q^{k-j+1}-1)+\frac{1}{2}\sum_{i=0}^{k}q^{ik}\qbin{n-i-1}{k-1}\nonumber\\&\qquad\qquad\qquad\leq q^{k^2+k}\sum_{i=0}^{k}q^{ (k-i)^{2}}\qbin{n-2k-1}{k-i}\qbin{k}{i}\nonumber\\
		\Leftrightarrow\quad&q^{2k^2+k}\sum_{i=1}^{k}q^{ \frac{3i^2}{2}-\frac{i}{2}-3ik}\qbin{n-2k-1}{k-i}\qbin{k}{i}\prod_{j=1}^{i}(q^{k-j+1}-1)+\frac{1}{2}\sum_{i=0}^{k}q^{ik}\qbin{n-i-1}{k-1}\nonumber\\&\qquad\qquad\qquad\leq q^{k^2+k}\sum_{i=1}^{k}q^{(k-i)^{2}}\qbin{n-2k-1}{k-i}\qbin{k}{i}\;.
	\end{align}
	Now, we note that $\prod_{j=1}^{i}(q^{k-j+1}-1)\leq q^{(i-1)(k+1)-\frac{i(i-1)}{2}}(q^{k-i+1}-1)$ for $i\geq1$. So, in order to prove \eqref{ongelijkheid2}, it is sufficient to show that the following inequality is valid:
	\begin{align}\label{ongelijkheid3}
		\frac{1}{2}\sum_{i=0}^{k}q^{ik}\qbin{n-i-1}{k-1}&\leq q^{k^2+k}\sum_{i=1}^{k}\left(q^{(k-i)^{2}}-q^{(k-i)(k-i-1)-1}(q^{k-i+1}-1)\right)\qbin{n-2k-1}{k-i}\qbin{k}{i}\nonumber\\
		&=q^{k^2+k}\sum_{i=1}^{k}q^{(k-i)(k-i-1)-1}\qbin{n-2k-1}{k-i}\qbin{k}{i}\nonumber\\
		&=q^{2k^{2}-2k+1}\qbin{n-2k-1}{k-1}\qbin{k}{1}+q^{k^2+k}\sum_{i=2}^{k}q^{(k-i)(k-i-1)-1}\qbin{n-2k-1}{k-i}\qbin{k}{i}\;.
	\end{align}
	Applying Lemma \ref{ferdinand} on the left hand side in \eqref{ongelijkheid3} we find that
	\begin{align}\label{links}
		\frac{1}{2}\sum_{i=0}^{k}q^{ik}\qbin{n-i-1}{k-1}\leq q^{(k-1)(n-k)}\sum_{i=0}^{k}q^{i}=q^{(k-1)(n-k)}\frac{q^{k+1}-1}{q-1}\;.
	\end{align}
	Now applying Lemma \ref{ferdinand} on the first term of the right hand side in \eqref{ongelijkheid3} we find that
	\begin{align}\label{rechts}
		q^{2k^{2}-2k+1}\qbin{n-2k-1}{k-1}\qbin{k}{1}\geq\left(1+\frac{1}{q}\right)q^{(k-1)(n-k)+1}\frac{q^{k}-1}{q-1}=(q+1)q^{(k-1)(n-k)}\frac{q^{k}-1}{q-1}\;.
	\end{align}
	From \eqref{links} and \eqref{rechts} it follows that in order to prove \eqref{ongelijkheid3}, it is sufficient to show that the following inequality is valid:
	\begin{align*}
		q^{k+1}-1\leq(q+1)\left(q^{k}-1\right) \quad\Leftrightarrow\quad q^{k}\geq q\;,
	\end{align*}
	This statement is clearly true.
\end{proof}

\begin{lemma}\label{lemmadriehoek}
	If $x\leq \frac{1}{\sqrt[8]{2}}f(q,n,k)$ and $n\geq 2k+2$, 
	then $\frac{\Delta}{C}>\sqrt[4]{2}x^2$.
\end{lemma}
\begin{proof}
	We want to prove that 
	\begin{align*}
	 \qbin{n-k-1}{k} q^{k^2+k} >\sqrt[4]{2} x^2 \left( \qbin nk -\qbin{n-k-1}{k} q^{k^2+k} \right).
	\end{align*}
	We first look at the case $k\geq2$. Given a $k$-space $\pi$ in $\PG(n-1,q)$, the number of $(k-1)$-spaces meeting $\pi$ equals $\qbin nk -\qbin{n-k-1}{k} q^{k^2+k}$ by Lemma \ref{lemmadisjunct}. We know that this number is smaller than the product of the number of points $Q \in \pi$ and the number of $(k-1)$-spaces through $Q$. This implies that
	\begin{align*}
	\qbin{n}{k} -\qbin{n-k-1}{k} q^{k^2+k} &\leq \qbin{k+1}{1}\qbin{n-1}{k-1} \\
	&= \frac{q^{k+1}-1}{q-1}\cdot\qbreuk{n-1}{n-k+1}{k-1}{} \\
	&< \frac{q^{nk-\frac{k^2}{2}-n+\frac{3k}{2}+1}}{(q-1)^{\frac{k^2}{2}-\frac{k}{2}+1}}\;.
	\end{align*}
	From this computation and the assumption on $x$ it follows that
	\begin{align*}
		\sqrt[4]{2} x^2 \left( \qbin nk -\qbin{n-k-1}{k} q^{k^2+k} \right)<(f(q,n,k))^2 \frac{q^{nk-\frac{k^2}{2}-n+\frac{3k}{2}+1}}{(q-1)^{\frac{k^2}{2}-\frac{k}{2}+1}}&=q^{nk-k^2-2}(q^2+q+1)\\
		&\leq\qbin{n-k-1}{k} q^{k^2+k}\;,
	\end{align*}
	where the final inequality is given by Lemma \ref{ongelijkheid} (which we can apply since $k\geq2$).
	\par Now we look at the case $k=1$. We have to prove that 
	\begin{align*}
		\qbin{n-2}{1} q^{2} >\sqrt[4]{2} x^2 \left( \qbin{n}{1} -\qbin{n-2}{1} q^{2} \right)
		\quad\Leftrightarrow\quad \frac{q^{n-2}-1}{q^2-1}q^2> \sqrt[4]{2} x^2\;.
	\end{align*}
	By the assumption on $x$ it is sufficient to prove that
	\begin{align*}
		\frac{q^{n-2}-1}{q^2-1}q^2> f(q,n,1)^{2}=q^{n-5}(q^3-1)\quad\Leftrightarrow\quad q^{n-2}+q^{n-3}-q^{n-5}-q^{2}>0\;,
	\end{align*}
	which is clearly true since $n\geq 4$.
\end{proof}

\begin{lemma}\label{lemmaongelijkheidklauspargroterdanmeer}
	Let $\mathcal{L}$ be a Cameron-Liebler set of $k$-spaces in $\PG(n,q)$, $n\geq3k+2$, with parameter $2\leq  x\leq \frac{1}{\sqrt[8]{2}} f(q,n,k)$, then $\mathcal{L}$ cannot contain $\left \lfloor \frac{3}{2}x\right \rfloor$ mutually disjoint $k$-spaces.
\end{lemma}
\begin{proof}
	We apply Lemma \ref{lemmaklaus}, with $c+1=\left \lfloor \frac{3}{2}x\right \rfloor$ and have to show that
	\begin{align*}
		\left \lfloor \frac{3}{2}x\right \rfloor s_1 - \binom{\left \lfloor \frac{3}{2}x\right \rfloor}{2}s'_2 > x\qbin nk\;.
	\end{align*}
	Using Notation \ref{deltaenC} and Lemma \ref{LemmaWsigma} we see that it is sufficient to prove that
	\begin{align*} 
		&\left \lfloor \frac{3}{2}x\right \rfloor \left(xC+\Delta\right)-x(\Delta +C)\\&\qquad -\frac{1}{2}\left \lfloor \frac{3}{2}x\right \rfloor\left(\left\lfloor \frac{3}{2} x\right\rfloor-1\right)\left(xC-(x-2)\Delta+(x-2)\left(\Delta-\frac{C}{2}\right)\right) > 0\\
		\Leftrightarrow\qquad& \Delta \left( \left \lfloor \frac{3}{2}x\right \rfloor-x\right)> C\left(x-\left \lfloor \frac{3}{2}x\right \rfloor x+\frac{1}{2}\left \lfloor \frac{3}{2}x\right \rfloor\left(\left \lfloor \frac{3}{2}x\right \rfloor-1\right)\left(\frac{x}{2}+1\right) \right)\;.
	\end{align*}
	From Lemma \ref{lemmadriehoek}, we know that $\frac{\Delta}{C}>\sqrt[4]{2}x^2$. Hence it is sufficient to prove that
	\begin{align}\label{vgl}
		\left( \left \lfloor \frac{3}{2}x\right \rfloor-x\right)\sqrt[4]{2}x^2> x-\left \lfloor \frac{3}{2}x\right \rfloor x+\frac{1}{2}\left \lfloor \frac{3}{2}x\right \rfloor\left(\left \lfloor \frac{3}{2}x\right \rfloor-1\right)\left(\frac{x}{2}+1\right) 
	\end{align}
	for all admissible $x$. We denote $\frac{3}{2}x-\left \lfloor \frac{3}{2}x\right \rfloor$ by $\varepsilon$. Then, $0\leq\varepsilon<1$. We rewrite \eqref{vgl} as
	\begin{align}\label{vgleps}
	&\left(\frac{3}{2}x-\varepsilon-x\right)\sqrt[4]{2}x^2> x-\left(\frac{3}{2}x-\varepsilon\right)x+\frac{1}{2}\left(\frac{3}{2}x-\varepsilon\right)\left(\frac{3}{2}x-\varepsilon-1\right)\left(\frac{x}{2}+1\right)\nonumber\\
	\Leftrightarrow\qquad &-\left(\frac{x+2}{4}\right)\varepsilon^{2} +\left(\frac{(3-4\sqrt[4]{2})x^{2}+x-2}{4}\right)\varepsilon +\frac{(8\sqrt[4]{2}-9)x^{3}+12x^{2}-4x}{16}>0\;.
	\end{align}
	The nontrivial zero of the quadratic function $f(\varepsilon)=-\left(\frac{x+2}{4}\right)\varepsilon^{2} +\left(\frac{(3-4\sqrt[4]{2})x^{2}+x-2}{4}\right)\varepsilon$ is smaller than 1 for any $x$, so $f(\varepsilon)>f(1)$ for any $\varepsilon\in[0,1[$ regardless of $x$. So, to prove \eqref{vgleps}, it is sufficient to prove
	\begin{align*}
		&\left(\frac{1}{2}\sqrt[4]{2}-\frac{9}{16}\right)x^{3} +\left(\frac{3}{2}-\sqrt[4]{2}\right)x^{2} -\frac{1}{4}x-1\geq 0\\
		\Leftrightarrow\qquad&(x-2)\left((8\sqrt[4]{2}- 9)x^{2}+6x+8 \right)\geq 0\;,
	\end{align*}
	which is clearly true for $x\geq 2$.
\end{proof}

\begin{lemma}\label{lemmainequality}
	If $2\leq x\leq \frac{1}{\sqrt[8]{2}}f(q,n,k)$ and $n\geq2k+2$ and $q\geq3$, then
	\begin{align*}
		\frac{x-1}{\frac{3}{2} x-2}\qbin{n-k-1}{k} q^{k^2+k} -\left(\frac{3}{2} x-3\right)s'_2 &> x \qbin nk -x \qbin{n-k-1}{k} q^{k^2+k}\quad\text{and}\\
		\frac{x-1}{\frac{3}{2} x-2}\qbin{n-k-1}{k} q^{k^2+k} -\left(\frac{3}{2} x-3\right)s'_2 &> \qbin nk - \qbin{n-k-1}{k} q^{k^2+k}+q^{k+1}\;.   
	\end{align*}
\end{lemma}
\begin{proof} 
	To prove to first inequality we rewrite it using Notation \ref{deltaenC}.
	\begin{align*}
		\frac{x-1}{\frac{3}{2} x-2}\Delta -\left(\frac{3}{2} x-3\right)(xC+(2-x)\Delta+(x-2)W_{\Sigma}) > xC\;.
	\end{align*}
	Using Lemma \ref{LemmaWsigma} we see that it is sufficient to prove
	\begin{align*}
		\frac{x-1}{\frac{3}{2} x-2}\Delta> C\left(\frac{3}{4}x^2+x-3\right)\;.%
	\end{align*}
	From Lemma \ref{lemmadriehoek}, we know that $\frac{\Delta}{C}>\sqrt[4]{2}x^2$. Hence it is sufficient to prove that
	\begin{align*}
		\frac{x-1}{\frac{3}{2} x-2} \sqrt[4]{2}x^2>\left(\frac{3 }{4}x^2+ x-3\right) \quad\Leftrightarrow \quad \left(\sqrt[4]{2}-\frac{9}{8}\right)x^3 -\sqrt[4]{2}x^2+\frac{13}{2}x-6>0\;.
	\end{align*}
	Using a computer algebra packet, we find that the last inequality is valid for all $x\geq 2$.
	\par To prove the second inequality for $k\geq2$ it is sufficient to prove that
	\begin{align*}
		&x \qbin nk -x \qbin{n-k-1}{k} q^{k^2+k} > \qbin nk - \qbin{n-k-1}{k} q^{k^2+k}+q^{k+1}\\
		\Leftrightarrow\quad &q^{k+1}<(x-1) \left( \qbin nk -\qbin{n-k-1}{k} q^{k^2+k} \right)=(x-1)\sum_{i=0}^{k}q^{ik}\qbin{n-i-1}{k-1}\;,
	\end{align*}
	whereby we applied repeatedly the left equality in \eqref{qpascal}. We immediately see that
	\begin{align*}
		(x-1)\sum_{i=0}^{k}q^{ik}\qbin{n-i-1}{k-1}> q^{k^{2}}\qbin{n-k-1}{k-1}>q^{(n-k)(k-1)+k}>q^{2k+2}>q^{k+1}\;.
	\end{align*}
	For $k=1$ we prove the second inequality directly. Note that $s'_{2}=x+2q$. The inequality reduces to
	\begin{align}\label{ongl2}
		&\frac{x-1}{\frac{3}{2} x-2}\cdot\frac{q^{n-2}-1}{q-1}q^{2} -\left(\frac{3}{2} x-3\right)(x+2q) >q^{2}+q+1\nonumber\\
		\Leftrightarrow\quad&\frac{x-1}{\frac{3}{2} x-2}\cdot\frac{q^{n-2}-1}{q-1}q^{2} >\frac{3}{2}x^{2}+3(q-1)x+q^{2}-5q+1\;. 
	\end{align}
	Recall that $2\leq x\leq \frac{1}{\sqrt[8]{2}}f(q,n,1)=\frac{1}{\sqrt[8]{2}}q^{\frac{n-5}{2}}\sqrt{q^{3}-1}<q^{\frac{n-2}{2}}$ We look at the left hand side of \eqref{ongl2} and find
	\begin{multline*}
	\frac{x-1}{\frac{3}{2} x-2}\cdot\frac{q^{n-2}-1}{q-1}q^{2}=\left(\frac{2}{3}+\frac{2}{3(3x-4)}\right)\frac{q^{n-2}-1}{q-1}q^{2}>\left(\frac{2}{3}+\frac{2}{9(x-1)}\right)\frac{q^{n-2}-1}{q-1}q^{2}\\>\frac{2}{3}\frac{q^{n-2}-1}{q-1}q^{2}+\frac{2}{9\left(q^{\frac{n-2}{2}}-1\right)}\frac{q^{n-2}-1}{q-1}(q^{2}-1)=\frac{2}{3}\frac{q^{n-2}-1}{q-1}q^{2}+\frac{2}{9}\left(q^{\frac{n-2}{2}}+1\right)(q+1)\;. 
	\end{multline*}
	For the right hand side of \eqref{ongl2} we find that
	\begin{align*}
		\frac{3}{2}x^{2}+3(q-1)x+q^{2}-5q+1&<\frac{3}{2\sqrt[4]{2}}q^{n-5}\left(q^{3}-1\right)+3(q-1)q^{\frac{n-2}{2}}+q^{2}-5q+1\\&<\frac{3}{2}q^{n-5}\left(q^{3}-1\right)+3(q-1)q^{\frac{n-2}{2}}+q^{2}-5q+1\;.
	\end{align*}
	So, to prove \eqref{ongl2} it is sufficient to prove that
	\begin{align}\label{ongl2bis}
		&\frac{2}{3}\frac{q^{n-2}-1}{q-1}q^{2}+\frac{2}{9}\left(q^{\frac{n-2}{2}}+1\right)(q+1)\geq\frac{3}{2}q^{n-5}\left(q^{3}-1\right)+3(q-1)q^{\frac{n-2}{2}}+q^{2}-5q+1\nonumber\\
		\Leftrightarrow\quad&\frac{2}{3}q^{n-1}-\frac{5}{6}q^{n-2}+\frac{2}{3}\frac{q^{n-4}-1}{q-1}q^{2}+\frac{3}{2}q^{n-5}-q^{\frac{n-2}{2}}\left(\frac{25}{9}q-\frac{29}{9}\right)-q^{2}+\frac{47}{9}q-\frac{7}{9}\geq 0\;.
	\end{align}
	For $n=4,5$ we can check this to be true for all $q\geq3$ using computer algebra software. For $n\geq6$ we rewrite \eqref{ongl2bis} as follows:
	\begin{align*}
		\frac{5}{18}(q-3)q^{n-2}+\frac{q^{\frac{n}{2}}}{18}\left(7q^{\frac{n-2}{2}}-50\right)+\frac{2}{3}\frac{q^{n-4}-1}{q-1}q^{2}+\left(\frac{29}{9}q^{\frac{n-2}{2}}-q^{2}\right)+\frac{47}{9}q+\left(\frac{3}{2}q^{n-5}-\frac{7}{9}\right)\geq 0\;.
	\end{align*}
	Here each of the terms in the left hand side is positive for $q\geq3$ since $n\geq6$, which proves the second inequality in the statement for $k=1$.
\end{proof}

\begin{lemma}\label{lemmaLcontainspoint-pencil}
If $\mathcal{L}$ is a Cameron-Liebler set of $k$-spaces in $\PG(n,q)$, $n\geq3k+2$ and $q\geq3$, with parameter $2\leq x\leq \frac{1}{\sqrt[8]{2}}f(q,n,k)$, then $\mathcal{L}$ contains a point-pencil.
\end{lemma}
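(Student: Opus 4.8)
\textbf{Proof plan for Lemma~\ref{lemmaLcontainspoint-pencil}.}
The plan is to argue by contradiction: suppose $\mathcal{L}$ contains no point-pencil. The goal is to produce a family of pairwise disjoint $k$-spaces inside $\mathcal{L}$ that is longer than Lemma~\ref{lemmaongelijkheidklauspargroterdan1} permits (namely $x+1$ mutually disjoint members), or alternatively to contradict a counting estimate built from Lemma~\ref{lemmainequality}. The starting point is that $\mathcal{L}$ is non-empty (parameter $x\geq 2>0$), so fix $\pi_0\in\mathcal{L}$. By Theorem~\ref{theodef}$(3)$ the number of elements of $\mathcal{L}$ disjoint from $\pi_0$ is $(x-1)\qbin{n-k-1}{k}q^{k^2+k}>0$, so there is a $\pi_1\in\mathcal{L}$ disjoint from $\pi_0$; and then by Lemma~\ref{lemmas1s2d1d2}$(2)$ combined with part~$(3)$ there are at least $(x-2)W_\Sigma$ — hence, when $x>2$, at least one — element of $\mathcal{L}$ disjoint from both. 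So one can keep extending a partial spread inside $\mathcal{L}$ as long as the relevant lower bound stays positive.

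The key step is to run the greedy extension and track a lower bound on the number of elements of $\mathcal{L}$ disjoint from a given partial spread $\{\pi_0,\dots,\pi_{i-1}\}\subseteq\mathcal{L}$. Using Theorem~\ref{theodef}$(3)$ (one deleted space kills $\qbin{n}{k}-(x-1)\qbin{n-k-1}{k}q^{k^2+k}=s_1$ worth, roughly) together with the bound $d_2\leq d'_2=(x-2)W_\Sigma$ and an inclusion–exclusion over the $i$ spaces already chosen, the number of members of $\mathcal{L}$ skew to all of $\pi_0,\dots,\pi_{i-1}$ is at least
\begin{align*}
(x-i)\qbin{n-k-1}{k}q^{k^2+k}-\binom{i}{2}s'_2\;,
\end{align*}
or a similar expression; this is exactly the type of quantity bounded below in Lemma~\ref{lemmainequality} (with $i$ up to $x-1$ or $x$). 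As long as this stays strictly positive we get a new disjoint element of $\mathcal{L}$; if it stays positive up through $i=x$ we obtain $x+1$ mutually disjoint $k$-spaces in $\mathcal{L}$, contradicting Lemma~\ref{lemmaongelijkheidklauspargroterdan1}. Therefore the greedy process must terminate earlier, say after choosing $\pi_0,\dots,\pi_{c}$ with $c\leq x-1$, meaning every element of $\mathcal{L}$ meets one of these $c+1\leq x$ mutually disjoint spaces.

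From here the argument should pass to a pairwise-intersecting subfamily: restricting to the elements of $\mathcal{L}$ meeting a fixed $\pi_j$, or better, using that $\mathcal{L}$ with no point-pencil still cannot be too large. Here the plan is to invoke the Erdős–Ko–Rado–type bound of Theorem~\ref{EKR1} and the Hilton–Milner–type bound of Theorem~\ref{theomussche}: if $\mathcal{L}$ contained no point-pencil, then the subfamily of $\mathcal{L}$ through (or meeting) one of the $\pi_j$ would be a pairwise-intersecting family with empty common intersection, hence of size at most $\qbin nk-q^{k^2+k}\qbin{n-k-1}{k}+q^{k+1}$ by Theorem~\ref{theomussche}; summing the contributions over the $c+1$ spaces $\pi_j$ and comparing with $|\mathcal{L}|=x\qbin nk$ via the second clause of Lemma~\ref{lemmainequality} (the $\max$ there precisely records both the ``pencil'' estimate $x\qbin nk-x\qbin{n-k-1}{k}q^{k^2+k}$ and the Mussche estimate $\qbin nk-\qbin{n-k-1}{k}q^{k^2+k}+q^{k+1}$) yields the contradiction. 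The main obstacle I expect is the bookkeeping in this last comparison — correctly counting how many elements of $\mathcal{L}$ each $\pi_j$ can account for without overcounting shared members, and checking that the inequality $\qbin{n-k-1}{k}q^{k^2+k}-(x-2)s'_2$ beating the $\max$ in Lemma~\ref{lemmainequality} is exactly what is needed at each step of the induction; the positivity of the greedy lower bound and the termination analysis are comparatively routine given Lemmas~\ref{ongelijkheid} and~\ref{lemmainequality}.
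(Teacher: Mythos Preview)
Your plan has the right ingredients but a genuine gap at the decisive step. You write that ``the subfamily of $\mathcal{L}$ through (or meeting) one of the $\pi_j$ would be a pairwise-intersecting family with empty common intersection,'' and then want to apply Theorem~\ref{theomussche}. But the set of members of $\mathcal{L}$ meeting a fixed $\pi_j$ is \emph{not} pairwise intersecting in general: two $k$-spaces can each meet $\pi_j$ while being disjoint from one another. So Theorem~\ref{theomussche} does not apply to those cover pieces, and the summation-over-$\pi_j$ comparison with $|\mathcal{L}|$ does not go through. This is not a bookkeeping issue; it is the missing idea.

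The paper fills this gap with two extra steps you do not have. First, after fixing $\pi\in\mathcal{L}$ and a maximal mutually disjoint set $\sigma_1,\dots,\sigma_{x-1}$ inside $\{\tau\in\mathcal{L}:\tau\cap\pi=\emptyset\}$, it uses pigeonhole on the $(x-1)\qbin{n-k-1}{k}q^{k^2+k}$ elements disjoint from $\pi$ to find one $\sigma_i$ meeting at least $\qbin{n-k-1}{k}q^{k^2+k}$ of them. It then restricts further to those elements that meet $\sigma_i$, are disjoint from $\pi$, and are disjoint from every $\sigma_j$ with $j\neq i$; subtracting at most $(x-2)s'_2$ for the latter conditions leaves at least $\qbin{n-k-1}{k}q^{k^2+k}-(x-2)s'_2$ elements, call this set $\mathcal{F}'_i$. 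The point of this refinement is that $\mathcal{F}'_i$ \emph{is} pairwise intersecting: two disjoint members $\alpha,\beta\in\mathcal{F}'_i$ together with $\pi$ and $\{\sigma_j:j\neq i\}$ would give $x+1$ mutually disjoint elements of $\mathcal{L}$, contradicting Lemma~\ref{lemmaongelijkheidklauspargroterdan1}. Now Lemma~\ref{lemmainequality} says $|\mathcal{F}'_i|$ exceeds the Hilton--Milner bound, so Theorem~\ref{theomussche} forces a common point $P$ of $\mathcal{F}'_i$. Second, having a common point $P$ for $\mathcal{F}'_i$ does not by itself say $\mathcal{L}$ contains the pencil through $P$; the paper closes this with a separate argument: if some $\gamma\ni P$ were not in $\mathcal{L}$, then $\gamma$ would meet all of $\mathcal{F}'_i$, i.e.\ more than $x\qbin{n}{k}-x\qbin{n-k-1}{k}q^{k^2+k}$ members of $\mathcal{L}$ (again Lemma~\ref{lemmainequality}), contradicting Theorem~\ref{theodef}(3) for $\gamma\notin\mathcal{L}$. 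Your outline reaches neither the pairwise-intersecting family nor the ``common point implies full pencil'' step; both are essential.
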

\begin{proof}
Let $\pi$ be a $k$-space in $\mathcal{L}$ and let $c$ be the maximal number of elements of $\mathcal{L}$ that are pairwise disjoint. By Theorem \ref{theodef}(3), there are $(x-1)\qbin{n-k-1}{k} q^{k^2+k}$ $k$-spaces in $\mathcal{L}$ disjoint from $\pi$. Within this collection of $k$-spaces, we find at most $c-1$ spaces $\sigma_1,\sigma_2,\dots, \sigma_{c-1}$ that are pairwise disjoint. By Lemma \ref{lemmaongelijkheidklauspargroterdanmeer}, $c-1 \leq \left \lfloor \frac{3}{2}x\right \rfloor -2$. By the pigeonhole principle, we find an index $i$ so that $\sigma_i$ meets at least $\frac{x-1}{c-1}\qbin{n-k-1}{k} q^{k^2+k}\geq \frac{x-1}{\left \lfloor \frac{3}{2}x\right \rfloor-2}\qbin{n-k-1}{k} q^{k^2+k}$ elements of $\mathcal{L}$ that are skew to $\pi$. We denote this collection of $k$-spaces disjoint from $\pi$ and meeting $\sigma_i$ in at least a point by $\mathcal{F}_{i}$. 
\par Now we want to show that $\mathcal{F}_{i}$ contains a family of pairwise intersecting subspaces. For any $\sigma_j$ with $j \neq i$, we find at most $s'_2$ elements that meet $\sigma_i$ and $\sigma_j$. In this way, we find that there are at least $\frac{x-1}{\left \lfloor \frac{3}{2}x\right \rfloor-2}\qbin{n-k-1}{k} q^{k^2+k} -(c-2)s'_2 \geq \frac{x-1}{\frac{3}{2} x-2}\qbin{n-k-1}{k} q^{k^2+k} -\left(\frac{3}{2} x-3\right)s'_2$ elements of $\mathcal{L}$ that meet $\sigma_i$, are disjoint from $\pi$ and that are disjoint from $\sigma_j$ for all $j\neq i$. We denote this subset of $\mathcal{F}_{i}\subseteq\mathcal{L}$ by $\mathcal{F}'_{i}$. This collection $\mathcal{F}'_{i}$ of $k$-spaces is a set of pairwise intersecting $k$-spaces: if two elements $\alpha, \beta$ in $\mathcal{F}'_{i}$ would be disjoint, then $ (\{ \sigma_1,\dots,\sigma_{c-1}\} \setminus \{\sigma_i\})\cup \{\alpha, \beta,\pi \}$ would be a collection of $c+1$ pairwise disjoint elements of $\mathcal{L}$, which is impossible since we supposed that $c$ is size of the maximal set of pairwise disjoint $k$-space in $\mathcal{L}$. By Lemma \ref{lemmainequality} we have $\frac{x-1}{\frac{3}{2} x-2}\qbin{n-k-1}{k} q^{k^2+k} -\left(\frac{3}{2} x-3\right)s'_2 > \qbin{n}{k}-\qbin{n-k-1}{k} q^{k^2+k}+q^{k+1}$ since $2\leq x\leq \frac{1}{\sqrt[8]{2}}f(q,n,k)$. This implies that $\cap_{F\in \mathcal{F}'_{i}} F$ is not empty by Theorem \ref{theomussche}; let $P$ be a point contained in $\cap_{F\in \mathcal{F}'_{i}} F$. 
We conclude that $\mathcal{F}'_{i}$ is a part of the point-pencil through $P$.
\par We conclude by showing that $\mathcal{L}$ contains the whole point-pencil through $P$. If $\gamma\notin \mathcal{L}$ is a $k$-space through $P$, then $\gamma$ meets at least $\frac{x-1}{\frac{3}{2} x-2}\qbin{n-k-1}{k} q^{k^2+k}-(\frac{3}{2} x-3)s'_2 > x\qbin{n}{k}-x\qbin{n-k-1}{k} q^{k^2+k}$ elements of $\mathcal{F}'_{i} \subseteq \mathcal{L}$, where the inequality follows from Lemma \ref{lemmainequality}. This contradicts Theorem \ref{theodef}(3).
\end{proof}

\begin{theorem}
	There are no Cameron-Liebler sets of $k$-spaces in $\PG(n,q)$, $n\geq3k+2$ and $q\geq3$, with parameter $2\leq x\leq \frac{1}{\sqrt[8]{2}}q^{\frac{n}{2}-\frac{k^2}{4}-\frac{3k}{4}-\frac{3}{2}}(q-1)^{\frac{k^2}{4}-\frac{k}{4}+\frac{1}{2}}\sqrt{q^2+q+1}$.
\end{theorem}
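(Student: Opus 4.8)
The plan is to turn the structural statement of Lemma~\ref{lemmaLcontainspoint-pencil} into a non-existence result by peeling off point-pencils while keeping track of the parameter. Assume for contradiction that $\mathcal{L}$ is a Cameron-Liebler set of $k$-spaces in $\PG(n,q)$, $n\geq 3k+2$, with parameter $x$ satisfying $2\leq x\leq f(q,n,k)$. By Lemma~\ref{lemmaLcontainspoint-pencil} the set $\mathcal{L}$ contains a point-pencil $\mathcal{P}_1$ through some point $P_1$; by Example~\ref{voorbeeldCL} this $\mathcal{P}_1$ is itself a Cameron-Liebler set with parameter $1$, so, since $\mathcal{P}_1\subseteq\mathcal{L}$, Lemma~\ref{basislemma4} (the difference of two nested Cameron-Liebler sets is again Cameron-Liebler) shows that $\mathcal{L}_1:=\mathcal{L}\setminus\mathcal{P}_1$ is a Cameron-Liebler set with parameter $x-1\geq 1$.

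The contradiction comes from the elementary observation that two distinct point-pencils in $\PG(n,q)$ always share a $k$-space: for $k\geq 1$ any two distinct points lie on a line, which extends to a common $k$-space, so distinct point-pencils are never disjoint. Thus it suffices to exhibit a second point-pencil contained in $\mathcal{L}_1=\mathcal{L}\setminus\mathcal{P}_1$, which is then automatically disjoint from $\mathcal{P}_1$. I would split into two cases according to the value of $x-1$. If $x-1>1$, then Lemma~\ref{lemmatussen012} (which applies as $n\geq 3k+2$) rules out parameters in the interval $]1,2[$, so in fact $x-1\geq 2$; moreover $x-1\leq f(q,n,k)-1<f(q,n,k)$, so Lemma~\ref{lemmaLcontainspoint-pencil} applies to $\mathcal{L}_1$ and produces a point-pencil $\mathcal{P}_2\subseteq\mathcal{L}_1$. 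If $x-1=1$, then $\mathcal{L}_1$ is a Cameron-Liebler set with parameter $1$, and since $n\geq 3k+2>2k+1$, Theorem~\ref{xgelijkaanee} forces $\mathcal{L}_1$ itself to be a point-pencil $\mathcal{P}_2$. In both cases $\mathcal{P}_2\subseteq\mathcal{L}\setminus\mathcal{P}_1$ is a point-pencil disjoint from $\mathcal{P}_1$, contradicting the observation above; hence no such $\mathcal{L}$ exists.

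I do not expect a genuine obstacle at this stage: all of the analytic work — the chain of inequalities in the preceding lemmas and the appeal to the Erd\H{o}s--Ko--Rado-type bound of Theorem~\ref{theomussche} — has already been absorbed into Lemma~\ref{lemmaLcontainspoint-pencil}. The only points that demand a little care are the bookkeeping of the parameter, namely checking that the reduced parameter $x-1$ still lies in the range $[2,f(q,n,k))$ in which Lemma~\ref{lemmaLcontainspoint-pencil} can be re-applied, and handling the boundary value $x=2$ separately by invoking the parameter-$1$ classification of Theorem~\ref{xgelijkaanee}.
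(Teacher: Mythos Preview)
Your proof is correct and follows essentially the same approach as the paper: peel off a point-pencil using Lemma~\ref{lemmaLcontainspoint-pencil}, and obtain a contradiction from the fact that two distinct point-pencils can never be disjoint. The only difference is cosmetic: the paper phrases the argument as an induction on the parameter (using the induction hypothesis to dispose of the case $x-1\geq 2$), whereas you observe that a single further application of Lemma~\ref{lemmaLcontainspoint-pencil} (or Theorem~\ref{xgelijkaanee} when $x-1=1$) already produces a second point-pencil inside $\mathcal{L}\setminus\mathcal{P}_1$, giving the contradiction directly without winding up an induction.
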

\begin{proof}
	We prove this result using induction on $x$. By Lemma \ref{lemmaLcontainspoint-pencil} we know that $\mathcal{L}$ contains the point-pencil $[P]_k$ through a point $P$. By Lemma \ref{basislemma4}(4), $\mathcal{L} \setminus [P]_k$ is a Cameron-Liebler set of $k$-spaces with parameter $(x-1)$, which by the induction hypothesis (in case $x-1\geq2$) or by Lemma \ref{lemmatussen012} (in case $1<x-1<2$) does not exist, or which is a point-pencil (in case $x-1=1$) by Theorem \ref{xgelijkaanee}. In the former case there is an immediate contradiction; in the latter case $\mathcal{L}$ contains two disjoint point-pencils of $k$-spaces, a contradiction.
\end{proof}

\subsection*{Acknowledgements}
The research of Jozefien D'haeseleer is supported by the FWO (Research Foundation Flanders). 
The authors thank Leo Storme and Ferdinand Ihringer for their remarks and suggestions while writing this article.

\end{document}